\theoremstyle{plain}
\newtheorem{prop}{Proposition}[section]
\newtheorem{thm}{Theorem}[section]
\newtheorem{lem}{Lemma}[section]
\newtheorem{rem}{Remark}[section]
\newtheorem{exa}{Example}[section]
\newtheorem{definition}{Definition}[section]
\newcommand{\R}{\mathbb{R}}
\DeclareMathOperator{\sign}{sign}   
\DeclareMathOperator{\supp}{supp}
\DeclareMathOperator{\Conv}{Conv}
\begin{document}
\allowdisplaybreaks

\title{The one-sided Lipschitz condition in the follow-the-leader approximation of scalar conservation laws}

\date{}
\author{M. Di Francesco and G. Stivaletta}
\address{Marco Di Francesco and Graziano Stivaletta - DISIM - Department of Information Engineering, Computer Science and Mathematics, University of L'Aquila, Via Vetoio 1 (Coppito) 67100 L'Aquila (AQ) - Italy}

\email{marco.difrancesco@univaq.it, graziano.stivaletta@graduate.univaq.it}

\begin{abstract}
    We consider the follow-the-leader particle approximation scheme for a $1d$ scalar conservation law with nonnegative $L^\infty_c$ initial datum and with a $C^1$ concave flux, which is known (see \cite{DiFrancescoRosini}) to provide convergence towards the entropy solution $\rho$ to the corresponding Cauchy problem. We provide two novel contributions to this theory. First, we prove that the one-sided Lipschitz condition satisfied by the approximating density $\rho^n$ proven in \cite{DiFrancescoRosini} is a \enquote{discrete version of an entropy condition}; more precisely, under fairly general assumptions on $f$ (which imply concavity of $f$) we prove that the continuum version $\left(f(\rho)/\rho\right)_x\leq 1/t$ of said condition allows to select a unique weak solution, despite $\left(f(\rho)/\rho\right)_x\leq 1/t$ is apparently weaker than the classical Oleinik-Hoff one-sided Lipschitz condition $f'(\rho)_x\leq 1/t$. Said result relies on an improved version of Hoff's uniqueness proof in \cite{Hoff}. A byproduct of it is that the entropy condition is \emph{encoded} in the particle scheme \emph{prior} to the many-particle limit, which was never proven before. Second, we prove that in case $f(\rho)=\rho(A-\rho^\gamma)$ the one-sided Lipschitz condition proven in \cite{DiFrancescoRosini} can be \emph{improved} to a discrete version of the classical (and \enquote{sharp}) Oleinik-Hoff condition. In order to make the paper self-contained with respect to \cite{DiFrancescoRosini}, we provide proofs (in some cases \enquote{alternative} ones) of all steps of the convergence of the particle scheme.
\end{abstract}

\keywords{nonlinear conservation laws, follow-the-leader approximation, one-sided Lipschitz condition, uniqueness of entropy solutions}
\subjclass[2010]{Primary: 35L65; 35A24; 35Q70. Secondary: 35F55; 35A35; 65N75; 90B20}

\maketitle

\section{Introduction}

The concept of \emph{entropy solution} for a scalar conservation law
\begin{equation}\label{eq:main_intro}
    \rho_t + f(\rho)_x = 0
\end{equation}
is a classic topic in the analysis of nonlinear PDEs, dating back to the pioneering works of Oleinik \cite{oleinik} and Kru\v{z}kov \cite{kruvzkov}. Roughly speaking, an entropy solution is a distributional solution to \eqref{eq:main_intro} in the $L^\infty_{x,t}$ space satisfying the additional distributional inequality
\begin{equation}\label{eq:kruzkov_intro}
   \eta_k(\rho)_t + q_k(\rho)_x \leq 0,
\end{equation}
where
\begin{equation}\label{eq:entropy_entropy_flux}
  \eta_k(\rho)=|\rho-k|,\qquad q_k(\rho)=(f(\rho)-f(k))\sign(\rho-k),  
\end{equation}
and $k$ is an arbitrary real number. Here $f$ is assumed to be locally Lipschitz continuous. 

It is well known that the concept of entropy solution is necessary in order to single out a unique weak solution to the Cauchy problem for \eqref{eq:main_intro} on $(x,t)\in \R \times [0,+\infty)$ with a given initial condition $\bar\rho\in L^\infty(\R)$. Such a fundamental fact was proven by Kru\v{z}kov in the multi-dimensional case $x\in \R^d$ in \cite{kruvzkov}. More precisely, \cite{kruvzkov} shows that there exists no more than one entropy solution to said Cauchy problem. For a thorough introduction to the subject, we refer to \cite{bressan_book} and \cite{dafermos_book} and the references therein.

\bigskip
Previous to Kruzkov's work, Oleinik in \cite{oleinik} provided an apparently different formulation of the entropy condition in the one-dimensional case $x\in \R$ under the further assumption that the \emph{flux} function $\rho\mapsto f(\rho)$ is $C^2$ and satisfies $f''>0$.
Such a formulation reads
\begin{equation}\label{eq:oleinik1}
   \rho_x \leq \frac{1}{(\min f'') t} \qquad \hbox{in $\mathcal{D}'(\R\times [0,+\infty))$}.
\end{equation}
A significant extension of \eqref{eq:oleinik1} to a more general case was provided by Hoff in \cite{Hoff}, in which it was proven (among other things) that the sharp version of \eqref{eq:oleinik1} in case of $f\in C^1$ and $f$ is either convex, concave, or linear, reads
\begin{equation}\label{eq:hoff1}
   f'(\rho)_x \leq \frac{1}{t} \qquad \hbox{in $\mathcal{D}'(\R\times [0,+\infty))$}.
\end{equation}
More precisely, \cite{Hoff} proves that if $f\in C^1$ then the condition \eqref{eq:hoff1} singles out a unique weak solution to the Cauchy problem for \eqref{eq:main_intro} with an $L^\infty$ initial condition \emph{if and only if} $f$ is either convex, concave, or linear.

The (distributional) one-sided Lipschitz conditions \eqref{eq:oleinik1} and \eqref{eq:hoff1} are easily interpreted as admissibility conditions for shock-wave solutions, in that they force solutions to avoid 
\enquote{non-physical} jumps. As an example, consider Burger's equation with $f(\rho)=\rho^2/2$, in which \eqref{eq:oleinik1} applies with $\min f'' = 1$. Such a condition allows for decreasing jumps, whereas increasing jumps may only occur at $t=0$ and they are smoothed to a continuous profile at positive times. In fact, the (uniform in $x$) one-sided, pointwise control of the $x$-slope in condition \eqref{eq:hoff1} forces $L^\infty$ entropy solutions to be also locally $BV$ in space. In this sense, \eqref{eq:oleinik1}-\eqref{eq:hoff1} may be interpreted as a \emph{smoothing effect}.

Condition \eqref{eq:hoff1} has, in fact, a more refined interpretation. The quantity  $1/t$ on the right-hand side of \eqref{eq:hoff1} is sharp, and achieved on \emph{rarefaction wave} profiles of the form
\[\rho(x,t)=R(\xi),\qquad \xi = \frac{x}{t},\]
with $R$ a differentiable profile. In this case, \eqref{eq:main_intro} clearly implies $f'(R(\xi)) = \xi$, and hence $f'(\rho(x,t))=x/t$, which yields the equality sign in \eqref{eq:hoff1}. Hence, condition \eqref{eq:hoff1} says that the slope of an entropy solution $\rho$ (locally) achieves its maximum on rarefaction wave profiles.

\bigskip
The resolution of the Cauchy problem for \eqref{eq:main_intro} in the entropy sense can be performed in many ways: by adding an artificial vanishing viscosity on the right-hand side of \eqref{eq:main_intro} in order to deal with parabolic equations (see e.g. \cite{oleinik} in the scalar case, \cite{kruvzkov} for general $L^\infty$ solutions in several dimensions, \cite{bardos} for initial boundary value problems and \cite{bianchinibressan} for strictly hyperbolic systems); via wave-front tracking algorithms, first introduced in \cite{dafermos}, which consist in the approximation of initial conditions with Riemann-type data and in the explicit resolution of wave interactions in the admissible sense (see e.g. \cite{bressan_book} and \cite{holdenrisebro_book} for a general overview and application to systems and multidimensional case); via nonlinear semigroup theory and Crandall-Liggett formula based on $L^1$ contraction (see e.g. \cite{crandall} in the scalar case and \cite{crandall_liggett} in a more general framework); via kinetic formulation (see e.g. \cite{perthame_tadmor} for the scalar case and \cite{lions_perthame_tadmor} for the multidimensional one); via relaxation schemes (see e.g. \cite{liu_2}, \cite{jin_xin}); via numerical algorithms (see e.g. \cite{glimm}, \cite{liu} for random choice method and \cite{godunov}, \cite{friedrichs_lax}, \cite{diperna} for finite-difference schemes), just to mention some. We refer to \cite[Section 6.9]{dafermos_book} for a detailed list of references about the above mentioned methods. 

\bigskip
In \cite{DiFrancescoRosini} a new method for the resolution of the Cauchy problem for \eqref{eq:main_intro} in the entropy sense was proposed, which works in one-space dimension, with non-negative initial data in $L^1\cap L^\infty(\R)$, under the assumption that the map
\[[0,+\infty)\ni \rho \mapsto v(\rho):=f(\rho)/\rho\]
is monotone. Said method can be seen as a \enquote{deterministic particle approximation} scheme, in that the entropy solution to the Cauchy problem is obtained as a \enquote{mean-field limit} of a system of interacting particles obeying to a system of ordinary differential equations. The latter is a \enquote{discrete} approximation of the Lagrangian evolution law $\dot{x}=v(\rho)$ encoded in the continuity equation \eqref{eq:main_intro}.

Let us sketch said approximation procedure. For simplicity, we assume here and throughout the paper that $v$ is monotone decreasing, symmetric statements hold in the increasing case, we omit the details. Given $\overline{\rho}\in L^1\cap L^\infty(\R)$, $\overline{\rho}\geq 0$, we consider a suitable \enquote{atomisation} of $\overline{\rho}$ for large $n\in \mathbb{N}$, namely a suitable set of ordered particles $\bar{x}_0,\ldots,\bar{x}_n\in \R$ such that the piecewise constant function
\[\overline{\rho}^n(x)=\sum_{k=0}^{n-1}\frac{1}{n(\bar{x}_{k+1}-\bar{x}_k)}\mathbf{1}_{[\bar{x}_k,\bar{x}_{k+1})}(x)\]
converges to $\overline{\rho}$ as $n\to +\infty$ in a sense to be specified below. We then consider the \emph{follow-the-leader} system,
\begin{equation}\label{eq:FTL_intro}
    \begin{cases}
    \displaystyle{\dot{x}_k=v\left(\frac{1}{n(x_{k+1}-x_k)}\right),\qquad k=0,\ldots,n-1},\\
    \displaystyle{\dot{x}_n= v(0)}.
    \end{cases}
\end{equation}
As $v$ is monotone decreasing, \eqref{eq:main_intro} can be seen as a first order model for \emph{traffic flow}, in which the vehicles' speed decreases with respect to the density of the vehicles $\rho$. System \eqref{eq:FTL_intro} is a discrete version of the Lagrangian law $\dot{x}=v(\rho)$ encoded in the continuity equation \eqref{eq:main_intro}, in which $\rho$ is computed as a ratio $[\hbox{mass}]/[\hbox{distance}]$, each particle is assumed to have mass $1/n$, and the distance is computed locally, at each particle $x_k$, in the \emph{positive} direction (which is consistent with the model assumption that vehicles adapt their speed according to the distance from the preceding vehicle). 

The main result in \cite{DiFrancescoRosini} states that the discrete density
\[\rho ^n(x,t)=\sum_{k=0}^{n-1}\frac{1}{n(x_{k+1}(t)-x_k(t))}\mathbf{1}_{[x_k(t),x_{k+1})(t)}(x)\]
converges strongly in $L^1_{loc}(\R\times [0,+\infty))$ towards the unique entropy solution to \eqref{eq:main_intro} with $\overline{\rho}$ as initial condition. The result is proven in two separate assumption frameworks, always assuming $\overline{\rho}\in L^1\cap L^\infty$ and $\overline{\rho}\geq 0$: 
\begin{itemize}
    \item [(i)] in case $v$ is monotone decreasing and locally Lipschitz and the initial datum $\overline{\rho}$ has bounded variation,
    \item [(ii)] in case $v\in C^1$ and strictly decreasing and the map $\rho\mapsto \rho v'(\rho)$ is non-increasing with no extra assumptions on $\overline{\rho}$.
\end{itemize}
The monotonicity of $\rho\mapsto \rho v'(\rho)$ has the following interpretation. By formally writing \eqref{eq:main_intro} as
\[\rho_t + v(\rho)\rho_x +\rho v'(\rho) \rho_x = 0\]
and recalling the concept of \emph{material derivative}
\[\frac{D \rho}{Dt} = \rho_t + v(\rho)\rho_x,\]
the continuity equation \eqref{eq:main_intro} can be formally written in the Lagrangian form
\[\frac{D \rho}{Dt} + \rho v'(\rho) \rho_x = 0,\]
so that the \enquote{Lagrangian characteristic speed} $\rho v'(\rho)$ plays the role of the first derivative of a Lagrangian flux. Hence, the monotonicity of $\rho\mapsto \rho v'(\rho)$ can be seen as a sort of \enquote{convexity of the Lagrangian flux}.

In case (i), the convergence result follows essentially by proving that the total variation of $\rho^n(\cdot,t)$ does not increase in time, similarly to what happens in other approximation procedures (cf. the wave-front tracking algorithm). In case (ii), the key estimate is a one-sided control of the difference quotient for $v(\rho^n(x_k(t),t))$, more precisely the estimate
\begin{equation}\label{eq:discrete_oleinik_intro1}
   \frac{v(\rho^n(x_{k+1}(t),t))-v(\rho^n(x_k(t),t))}{x_{k+1}(t)-x_k(t)}\leq \frac{1}{t},
\end{equation}
which yields, in particular, a uniform-in-$n$ local $BV$ estimate for $v(\rho^n(\cdot,t))$. In both (i) and (ii), the consistency of the approximation scheme is then obtained by proving that the limit of $\rho^n$ satisfies Kruzkov's entropy condition \eqref{eq:kruzkov_intro}-\eqref{eq:entropy_entropy_flux} in a distributional sense. It is worth mentioning at this stage that such a strategy does not allow to detect a \enquote{discrete analogue} of the entropy condition (which in turns happens, for example, in the wave-front tracking approximation), because the consistency with Kruzkov's condition \eqref{eq:kruzkov_intro}-\eqref{eq:entropy_entropy_flux} is obtained only for large $n$. 

\bigskip
A natural question arises at this stage: does the discrete density $\rho^n$ satisfy a discrete analogue of the entropy condition \emph{prior} to sending $n\rightarrow +\infty$? In fact, \eqref{eq:discrete_oleinik_intro1} seems to be a discrete analogue of the \emph{continuum} condition
\begin{equation}\label{eq:continuum_oleinik_intro1}
    v(\rho)_x \leq \frac{1}{t}\,,
\end{equation}
which, in turns, seems to be a good candidate to select a unique weak solution in the continuum case, because it only allows for increasing jumps ($v$ is monotone decreasing). Therefore, our previous question implies the next one: is condition  \eqref{eq:continuum_oleinik_intro1} enough to single out a unique weak solution to the Cauchy problem for  \eqref{eq:main_intro}? In case of positive answer, \eqref{eq:discrete_oleinik_intro1} would be a discrete analogue of the entropy condition in the follow-the-leader approximation scheme.

Now, recalling
\[f'(\rho)=v(\rho)+\rho v'(\rho),\]
the assumptions in (ii) (namely both $v(\rho)$ and $\rho v'(\rho)$ being monotone decreasing) imply $f$ is strictly concave. In this case, a sufficient condition to characterise entropy solutions is the one by Hoff \eqref{eq:hoff1}. Although similar to it,  \eqref{eq:discrete_oleinik_intro1} looks different from a discrete version of \eqref{eq:hoff1}. The latter should rather look like
\begin{equation}\label{eq:discrete_oleinik_intro2}
   \frac{f'(\rho^n(x_{k+1}(t),t))-f'(\rho^n(x_k(t),t))}{x_{k+1}(t)-x_k(t)}\leq \frac{1}{t}. 
\end{equation}

\bigskip
The main goal of this paper is finding an answer to the above questions and clarifying the role of discrete one-sided Lipschitz conditions in the convergence and consistency (in the entropy sense) of the follow-the-leader approximation scheme.

\bigskip
\begin{itemize}
    \item 
Our primary goal is to highlight the role of the discrete one-sided Lipschitz condition \eqref{eq:discrete_oleinik_intro1} in the convergence of the follow-the-leader scheme \eqref{eq:FTL_intro} towards entropy solutions. More precisely, we shall prove that, under a fairly general set of assumptions on $v$ and with the extra assumption that the initial condition is in $BV$, condition \eqref{eq:discrete_oleinik_intro1} allows to prove that the limit of $\rho^n$ is an entropy solution without passing through the distributional formulation in \eqref{eq:kruzkov_intro}-\eqref{eq:entropy_entropy_flux}. This fact has several positive repercussions: 
\begin{itemize}
 \item [(i)] as mentioned above, condition \eqref{eq:discrete_oleinik_intro1} somehow plays the role of a \emph{discrete version of the entropy condition} satisfied by the follow-the-leader particle approximation scheme, a novelty in the literature;
    \item [(ii)] the primary role played by the \enquote{one-sided transport} nature encoded in \eqref{eq:FTL_intro} becomes evident in the consistency of the scheme, a factor that remains somewhat hidden in the consistency proof based on Kruzkov's condition \eqref{eq:kruzkov_intro}-\eqref{eq:entropy_entropy_flux} in \cite{DiFrancescoRosini}; in this sense, it becomes clear that the \emph{entropy condition} in the scheme \eqref{eq:FTL_intro} is encoded in the choice of considering a  \enquote{forward approximation} of the Lagrangian law $\dot{x}=v(\rho)$ due to $v$ being monotone decreasing;
        \item [(iii)] the proof of the consistency of the approximation scheme gets significantly shortened and simplified.
\end{itemize}
Given that \eqref{eq:continuum_oleinik_intro1} is \emph{weaker} than \eqref{eq:hoff1}, the main technical difficulty to prove our result relies in proving 
that the continuum condition  \eqref{eq:continuum_oleinik_intro1} is enough to single out a \emph{unique} weak solution to the Cauchy problem of \eqref{eq:main_intro}. We have found one proof of that statement in the book \cite{evans_PDEs}, which works in the specific case of $f$ being uniformly convex (or concave). We shall achieve our goal by slightly improving Hoff's uniqueness proof in \cite{Hoff}.
An issue arises with respect to the regularity of the initial datum: in order to prove the above uniqueness result we shall need the initial datum to be in $BV$, which allows to achieve the initial condition in a strong $L^1$ sense, which is used in Hoff's proof. Our uniqueness result is proven in Theorem \ref{prop4}. The convergence of the scheme in this general case is resumed in Theorem \ref{thm1}.

\bigskip
\item
We then show that in the particular case 
\[v(\rho)=A-B\rho^\gamma\,,\qquad A,B,\gamma>0\,,\]
the follow-the-leader approximation \eqref{eq:FTL_intro} features an \emph{improved version of the one-sided Lipschitz estimate}, namely,  \eqref{eq:discrete_oleinik_intro2} holds in this case. Since \eqref{eq:discrete_oleinik_intro2} mimics \eqref{eq:hoff1} in the many particle limit, we need no $BV$ assumption on the initial condition to prove uniqueness of the limit, as \eqref{eq:hoff1} is already known to be equivalent to Kruzkov's condition \eqref{eq:kruzkov_intro}-\eqref{eq:entropy_entropy_flux}, and we can take advantage of Chen's and Rascle's result \cite{chen_rascle}, in which a mere continuity in the sense of measures near $t=0$ is enough to achieve uniqueness. The improved one-sided estimate is contained in Theorem \ref{prop:improved}. The convergence of the scheme in this particular case is stated in Theorem \ref{thm2}.
\end{itemize}

We emphasise that our proofs of the convergence of the particle scheme are carried out entirely without using Kruzkov's entropy condition \eqref{eq:kruzkov_intro}-\eqref{eq:entropy_entropy_flux}.

In order to make the paper self-contained, we shall provide here some of the technical results included in \cite{DiFrancescoRosini}, with the additional goal to improving their presentation. For example, we shall improve the proofs of the discrete maximum principle and of the discrete one-sided Lipschitz condition proven in \cite{DiFrancescoRosini} (more precisely, Lemmas 1 and 6 in \cite{DiFrancescoRosini}) by clarifying the assumptions on $v$ (implicitly assumed in the proof of \cite[Lemma 1]{DiFrancescoRosini}) and by using a regularised version of the positive part in \cite[Lemma 6]{DiFrancescoRosini}, which makes the proof easier to read. Moreover, our proof of the discrete maximum principle is a direct one, alternative to the \enquote{reductio ad absurdum} provided in \cite{DiFrancescoRosini}.

\section{Preliminaries and statement of the results}\label{sec:preliminaries}

\subsection{Setting of the problem}

We consider the Cauchy problem
\begin{equation}
\label{s1}
\begin{cases} \rho_t+f(\rho)_x=0, &\quad (x,t)\in \mathbb{R}\times(0,+\infty),
\\
\rho(x,0)=\overline{\rho}(x), &\quad x\in \mathbb{R},
\end{cases}
\end{equation} 
with the notation
\[f(\rho):=\rho v(\rho).\]  
Throughout the paper, we shall assume the following basic conditions on $v$ and $\overline{\rho}$:
\begin{itemize}
\item[{\bf{(I)}}] $\overline{\rho}	\in L^1(\mathbb{R})\cap L^\infty(\mathbb{R})$ is non-negative and with compact support,
\item[{\bf{(V1)}}] $\rho \mapsto v(\rho)$ is $C^1([0,\overline{R}])$ with $v$ strictly decreasing on $[0,\overline{R}]$,
\end{itemize}
with
\[\overline{R}:=||\overline{\rho}||_{L^\infty(\mathbb{R})}.\]
We use the notation \[v_{\max}:=v(0)<+\infty. \]
Moreover, by denoting
\begin{equation}\label{eq:phi_def}
    \phi(\rho):=\rho v'(\rho),
\end{equation}
we assume that
\begin{itemize}
\item[{\bf{(V2)}}] there exists a constant $K\geq 0$ such that
\begin{equation}
\label{13}
0\leq \frac{\phi(\rho)-\phi(\sigma)}{v(\rho)-v(\sigma)}\leq K
 \quad \hbox{for all $\rho,\sigma \in [0,\overline{R}]$ with $\rho\neq \sigma$}.
\end{equation}
\end{itemize}
Since $v$ is strictly decreasing, the first inequality in \eqref{13} is equivalent to requiring 
\[[0,+\infty)\ni\rho\mapsto\phi(\rho)\in \R \quad \hbox{non-increasing.}\]
We observe that, since
\[f'(\rho)=v(\rho)+\rho v'(\rho)=v(\rho)+\phi(\rho),\]
the above assumptions {\bf{(V1)}} and {\bf{(V2)}} imply that $f'$ is strictly decreasing, and hence $f$ is \emph{strictly concave}. 

\begin{exa}
A first example of function $v$ satisfying our assumptions is $v(\rho):=v_{\max}-\rho^{\gamma}$ with $\gamma>0$. We have $v'(\rho)=-\gamma\rho^{\gamma-1}\leq 0$ for all $\rho\in [0,\overline{R}]$, $\phi(\rho)=-\gamma\rho^{\gamma}$ and hence $\phi'(\rho)= -\gamma^2 \rho^{\gamma-1}\leq 0$ for all $\rho\in [0,\overline{R}]$. In this case
\[\frac{\phi(\rho)-\phi(\sigma)}{v(\rho)-v(\sigma)}=\gamma\]
and {\bf{(V2)}} is trivially satisfied.
\end{exa}

\begin{exa}
Another example is given by $v(\rho) :=v_{\max}\biggl[\log\biggl(\dfrac{1}{\beta}\biggr)\biggr]^{-1}\log\biggl(\dfrac{1}{\rho+\beta}\biggr)$ with $0<\beta<1$. We have $v'(\rho)=-v_{\max}\biggl[\log\biggl(\dfrac{1}{\beta}\biggr)\biggr]^{-1}\dfrac{1}{\rho+\beta}\leq 0$ for all $\rho\in [0,\overline{R}]$,
$\phi(\rho)=-v_{\max}\biggl[\log\biggl(\dfrac{1}{\beta}\biggr)\biggr]^{-1}\dfrac{\rho}{\rho+\beta}$ and hence $\phi'(\rho)= -v_{\max}\biggl[\log\biggl(\dfrac{1}{\beta}\biggr)\biggr]^{-1}\!\dfrac{\beta}{\rho+\beta}\leq 0$ for all $\rho\in [0,\overline{R}]$. By denoting $A=\log\biggl(\dfrac{1}{\rho+\beta}\biggr)$ and $B=\log\biggl(\dfrac{1}{\sigma+\beta}\biggr)$, we have
\[\frac{\phi(\rho)-\phi(\sigma)}{v(\rho)-v(\sigma)}=\beta\frac{e^A- e^B}{A-B},\]
which satisfies {\bf{(V2)}} with $K=\beta e^{\overline{R}}$.
\end{exa}

\begin{rem}[Sufficient condition to have {\bf{(V2)}}] In case $v\in C^2([0,\overline{R}])$, a sufficient condition for the upper bound inequality in \eqref{13} to hold is  $\phi(\rho)$ non-increasing and the existence of $\tilde{K}\geq 0$ such that
\begin{equation}
\biggl|\frac{\rho v''(\rho)}{v'(\rho)}\biggr|\leq \tilde{K} \quad \hbox{for all $\rho \in [0,\overline{R}]$}.
\end{equation}
Indeed the monotonicity of $\phi$ implies the lower bound in \eqref{13}, while, introducing the notation
\begin{equation*}
a:=v(\rho), \quad b:=v(\sigma), \quad w:=v^{-1},
\end{equation*}
we have $\phi'(\rho)=v'(\rho)+\rho v''(\rho)$, $w'(d)=\dfrac{1}{v'(w(d))}$ and hence
\begin{equation*}
\begin{split}
\frac{\phi(\rho)-\phi(\sigma)}{v(\rho)-v(\sigma)}=\frac{\phi(w(a))-\phi(w(b))}{a-b}\leq  \sup_{d\in [0,\overline{R}]}\bigl|\phi'(w(d))w'(d)\bigr|\leq & 1+\sup_{d\in [0,\overline{R}]}\biggl|\frac{w(d)v''(w(d))}{v'(w(d))} \biggr|<\infty,
\end{split}
\end{equation*}
which proves the upper bound in \eqref{13}. Moreover, both the previous two examples satisfies this sufficient condition, respectively with $\tilde{K}=|\gamma-1|$ and $\tilde{K}=\dfrac{\overline{R}}{\beta+\overline{R}}$.
\end{rem}

Let us now introduce the two concepts of entropy solution we shall deal with.

\begin{definition}[Classical entropy solution]\label{definition_classical_entropy_solution}
Let $\overline{\rho}\in L^1\cap L^\infty(\mathbb{R})$. A function $\rho\in L^\infty([0,+\infty)\,;\,L^1\cap L^\infty(\R))$ is an \emph{entropy solution} to \eqref{s1} if $\rho$ solves the equation $\rho_t+f(\rho)_x=0$ distributionally on $\R\times (0,+\infty)$, $\rho(\cdot,t)\to \overline{\rho}$ in the weak-star measure sense as $t\searrow 0$, and 
\begin{equation}
\label{def_entropy_classical}
 f'(\rho(x,t))_x \leq \frac{1}{t}\qquad \hbox{in $\mathcal{D}'(\R\times [0,+\infty))$}.
\end{equation}
\end{definition}

\begin{definition}[Extended entropy solution]\label{definition_extended_entropy_solution}
Let $\overline{\rho}\in L^1\cap L^\infty(\mathbb{R})$. A function $\rho\in L^\infty([0,+\infty)\,;\,L^1\cap L^\infty(\R))$ is an \emph{extended entropy solution} to \eqref{s1} if $\rho$ solves the equation $\rho_t+f(\rho)_x=0$ distributionally on $\R\times (0,+\infty)$, $\rho(\cdot,t)\to \overline{\rho}$ strongly in $L^1$ as $t\searrow 0$, and there exists a positive constant $C\geq 1$ such that
\begin{equation}
\label{def_entropy_extended}
 f'(\rho(x,t))_x \leq \frac{C}{t}\qquad \hbox{in $\mathcal{D}'(\R\times [0,+\infty))$}.
\end{equation}
\end{definition}

\begin{rem}
\emph{
Definition \ref{definition_classical_entropy_solution} is the one provided by Hoff in \cite{Hoff}. Note that \eqref{def_entropy_extended} is weaker than \eqref{def_entropy_classical} because of the constant $C$, which we allow to be any constant larger or equal $1$, whereas $C=1$ in Hoff's condition. However, Definition \ref{definition_extended_entropy_solution} requires a stronger continuity assumption near $t=0$ compared to Definition \ref{definition_classical_entropy_solution}.}
\end{rem}

\subsection{The follow-the-leader approximation scheme}
Let us now introduce the follow-the-leader particle approximation. For the sake of simplicity, we suppose that the initial mass is normalised, that is $\|\overline{\rho}\|_{L^{1}(\mathbb{R})}=1$, and moreover we denote with
\[[\overline{x}_{\min},\overline{x}_{\max}] :=\mathrm{Conv}( \mathrm{supp}(\overline{\rho}))\]
the convex hull of the support of $\overline{\rho}$. 

We split the interval $[\overline{x}_{\min},\overline{x}_{\max}]$ into $n$ sub-intervals having equal mass $\ell_{n}:=1/n$. So, for a fixed $n\in \mathbb{N}$ sufficiently large, we set $\overline{x}_{0}^{n}:=\overline{x}_{\min}$, $\overline{x}_{n}^{n}:=\overline{x}_{\max}$ and we define recursively
\begin{equation*}
\overline{x}_{i}^{n}:= \sup \biggl\{ x\in \mathbb{R}: \int_{\overline{x}_{i-1}^{n}}^{x}\overline{\rho}(x)dx < \ell_{n}\biggr\} \quad \text{for}\ i\in \{1, \dots, n-1\}.
\end{equation*}
From the previous definition we immediately have that $\overline{x}_{0}^{n}<\overline{x}_{1}^{n}<\dots <\overline{x}_{n}^{n}$ and
\begin{equation*}
\int_{\overline{x}_{i-1}^{n}}^{\overline{x}_{i}^{n}}\overline{\rho}(x)dx= \ell_{n} \quad \text{for}\ i\in \{1,\dots,n-1\}.
\end{equation*}
Now, we introduce the follow-the-leader system describing the evolution of the $n+1$ particles with initial positions $\overline{x}_{i}^{n}$, $i=0,\ldots,n$. Since the velocity field is non-negative and decreases with respect to the density $\rho$, the follow-the-leader scheme should consider a \emph{forward} finite-difference approximation of the density. As a consequence, with the notation
\begin{equation*}
R_{i}^{n}(t):=\frac{\ell_{n}}{x_{i+1}^{n}(t)-x_{i}^{n}(t)},\qquad t\geq 0,\quad i\in \{0,\dots,n-1\},
\end{equation*}
the ODE system we consider is
\begin{equation}
\label{9}
\begin{cases}
\dot{x}_{i}^{n}(t)=v(R_{i}^{n}(t)), & \quad \text{for} \ i\in \{0,\dots,n-1\},
\\
\dot{x}_{n}^{n}(t)=v_{\max},
\\
x_{i}^{n}(0)=\overline{x}_{i}^{n}, & \quad \text{for} \ i\in \{0,\dots,n\}.
\end{cases}
\end{equation}
A discrete maximum principle (see \cite[Lemma 1]{DiFrancescoRosini}) ensures that particles never collide and this gives the global existence of the solution for \eqref{9}. Therefore, we can construct the time-depending piecewise constant density having support in $[x_{0}(t),x_{n}(t)]$ and given by
\begin{equation}
\label{12}
\rho^{n}(x,t):=\sum_{i=0}^{n-1}R_{i}(t)\mathbf{1}_{[x_{i}(t),x_{i+1}(t))}(x)=\sum_{i=0}^{n-1}\frac{\ell_{n}}{x_{i+1}(t)-x_{i}(t)}\mathbf{1}_{[x_{i}(t),x_{i+1}(t))}(x).
\end{equation}

\subsection{Statement of the results}
We are now ready to state our results. The first one deals with the general case of $v$ satisfying {\bf{(V1)}} and {\bf{(V2)}}. First of all, we state the following uniqueness result.

\begin{thm}
\label{prop4}
Let $\overline{\rho}\in BV(\R)$ satisfy {\bf{(I)}}. Assume $v$ satisfies {\bf{(V1)}} and {\bf{(V2)}}. Then the Cauchy problem \eqref{s1} has at most one weak solution satisfying Definition \ref{definition_extended_entropy_solution}.
\end{thm}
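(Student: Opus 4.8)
The plan is to adapt Hoff's uniqueness argument for the sharp one-sided Lipschitz condition to the weaker \enquote{extended} condition \eqref{def_entropy_extended}. Suppose $\rho_1,\rho_2$ are two extended entropy solutions of \eqref{s1} with the same datum $\overline{\rho}$; replacing the two constants by their maximum, we may assume both satisfy $f'(\rho_i)_x\le C/t$ with a common $C\ge 1$. First I would record the structural information carried by this inequality. Writing $a_i(x,t):=f'(\rho_i(x,t))$, the bound $\partial_x a_i\le C/t$ in $\mathcal{D}'$ means that, for a.e.\ $t>0$, the map $x\mapsto a_i(x,t)-\tfrac{C}{t}x$ is non-increasing; in particular $a_i(\cdot,t)$ is locally $BV$ and its only jumps are downward. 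Since \textbf{(V1)}--\textbf{(V2)} force $f$ to be strictly concave, $f'$ is a strictly decreasing $C^1$ bijection of $[0,\overline{R}]$ onto its image, so $\rho_i(\cdot,t)=(f')^{-1}(a_i(\cdot,t))$ is itself locally $BV$ and has only \emph{upward} jumps, i.e.\ every discontinuity is an admissible (Lax) shock and no rarefaction shocks occur.

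Next, following Hoff, I would encode the solution through backward characteristics. For fixed $t>0$ set $y_i(x,t):=x-t\,a_i(x,t)=x-t f'(\rho_i(x,t))$, the foot at time $0$ of the characteristic issued from $(x,t)$. Using that $\rho_i$ is a weak solution together with the \enquote{downward jumps only} structure of $a_i$, one shows that $\rho_i$ is constant along each straight segment joining $(y_i(x,t),0)$ to $(x,t)$ and that this segment does not cross any shock; this is the genuinely nonlinear (strictly concave) scalar theory of generalized characteristics. Consequently $\rho_i(x,t)=(f')^{-1}((x-y_i(x,t))/t)$ is determined by the trace of $\rho_i$ at the foot $y_i$ at time $0$. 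The hypothesis $\overline{\rho}\in BV(\R)$ enters precisely here: it guarantees, and makes usable, the strong $L^1$ attainment of the initial datum required by Definition \ref{definition_extended_entropy_solution}, so that the foot value is read off from $\overline{\rho}$ and not from some spurious behaviour as $t\searrow0$. This produces a Lax--Oleinik-type representation of $\rho_i(x,t)$ in terms of $\overline{\rho}$ alone; since $\rho_1$ and $\rho_2$ share the same datum, the two representations coincide and $\rho_1=\rho_2$ a.e.

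The main obstacle is that the weaker constant $C\ge1$ breaks the monotonicity on which Hoff's original argument rests: when $C=1$ one has $\partial_x y_i=1-t\,\partial_x a_i\ge 0$, so the foot-point map is non-decreasing and backward characteristics cannot cross, whereas for $C>1$ this quantitative statement is lost. The improvement I would pursue is to observe that the looser constant only weakens the \emph{a priori} bound, not the solution itself. On one hand, the non-crossing of backward characteristics is really a consequence of the \emph{admissibility} of all shocks (downward jumps of $a_i$), which holds for any finite $C$, rather than of the sharp value of the constant. On the other hand, in any region where $\rho_i$ is smooth, $a_i=f'(\rho_i)$ solves the Burgers equation $\partial_t a_i+a_i\partial_x a_i=0$, so $\partial_x a_i$ obeys the Riccati law $\tfrac{D}{Dt}(\partial_x a_i)=-(\partial_x a_i)^2$ along characteristics reaching $t=0$, which forces $\partial_x a_i\le 1/t$ there \emph{independently of $C$}. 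Combined with the fact that all jumps are downward, this restores the non-crossing and lets Hoff's comparison run with the sharp constant effectively in force.

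A secondary difficulty is that $f$ is only assumed $C^1$ and strictly, though not uniformly, concave, so the Hopf--Lax/Lax--Oleinik estimates used in the uniformly convex case (such as the one in \cite{evans_PDEs}) are unavailable. I would therefore work directly with generalized characteristics and with the monotone structure of $a_i(\cdot,t)-\tfrac{C}{t}x$ rather than with an explicit variational formula, invoking \textbf{(V2)} (equivalently the bound \eqref{13}) only to guarantee the finiteness of $C$ and the $C^1$ invertibility of $f'$ that makes the passage between $a_i$ and $\rho_i$ legitimate.
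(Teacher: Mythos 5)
Your proposal goes by generalized characteristics and a Lax--Oleinik-type representation, which is a genuinely different route from the paper's; unfortunately the load-bearing step is asserted rather than proved, and the assertion is essentially the theorem itself. The claim that \enquote{$\rho_i$ is constant along each straight segment joining $(y_i(x,t),0)$ to $(x,t)$ and that this segment does not cross any shock} is justified only by appeal to \enquote{the genuinely nonlinear scalar theory of generalized characteristics} --- but that theory is established for \emph{entropy} solutions (BV solutions all of whose jumps are admissible, via Volpert's structure theory, or piecewise smooth solutions), and upgrading an $L^\infty$ weak solution satisfying only \eqref{def_entropy_extended} to that class is precisely the hard content here. Your attempted repair does not close the gap: the Riccati identity $\tfrac{D}{Dt}(\partial_x a_i)=-(\partial_x a_i)^2$ is valid only on open sets where $\rho_i$ is $C^1$ and only along characteristics that remain in such a set down to $t=0$, whereas a solution known only to satisfy $f'(\rho)_x\le C/t$ in $\mathcal{D}'$ has no a priori piecewise-smooth structure, so the claimed sharp bound $\partial_x a_i\le 1/t$ \enquote{independently of $C$} is not established. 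Similarly, \enquote{non-crossing of backward characteristics is a consequence of admissibility of all shocks rather than of the sharp constant} begs the question: for $C>1$ the foot-point map $x\mapsto y_i(x,t)=x-tf'(\rho_i(x,t))$ need not be monotone, and its monotonicity is exactly the quantitative non-crossing statement that makes the representation single-valued. If you instead complete the chain \enquote{all jumps admissible $\Rightarrow$ entropy solution $\Rightarrow$ uniqueness}, you are importing the Kru\v{z}kov/Oleinik machinery the paper deliberately avoids, and you still owe the first implication for non-piecewise-smooth solutions.

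For contrast, the paper's proof requires no structural information about the two solutions at all. It is a duality (Holmgren-type) argument in the spirit of Hoff: test the equation for $e=\rho-\tilde\rho$ against functions $\phi^{\pm}_{\varepsilon,\delta}$ obtained by solving backward linear transport equations whose coefficients are mollifications $(j_\varepsilon*f')(\tilde\rho)$, $(j_\varepsilon*f')(\rho)$, with terminal data $j_\delta*\psi^{\pm}$ where $\psi=\psi^++\psi^-$ is split so that $(\psi^+)'\ge0\ge(\psi^-)'$; the sign of $\partial_x\phi^{\pm}$ is then known, and a concave regularisation $f_\eta$ of the flux makes the quadratic error term $e^2 f_\eta''(\zeta)\,\partial_x\phi^+_{\varepsilon,\delta}/2$ non-positive. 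The weakened constant $C\ge1$ enters only through the Gronwall bound $\|\partial_x\phi^{\pm}_{\varepsilon,\delta}(\cdot,t)\|_{L^\infty}\le\|(\psi^\pm)'\|_{L^\infty}(t_2/t)^C$, which is finite for $t\ge t_1>0$; the blow-up of $(t_2/t_1)^C$ as $t_1\searrow0$ is harmless because $\varepsilon\to0$ is taken \emph{before} $t_1\to0$, and the surviving term $\|e(\cdot,t_1)\|_{L^1}$ is killed by the strong $L^1$ attainment of the initial datum --- which is where $\overline{\rho}\in BV$ and the stronger continuity requirement of Definition \ref{definition_extended_entropy_solution} are actually used. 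If you want to salvage your approach, you would need to first prove, from \eqref{def_entropy_extended} alone, that the solutions are $BV_{loc}$ in space-time with only admissible jumps and then invoke (and verify the hypotheses of) the structure theory; as written, the proposal has a genuine gap.
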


The proof of Theorem \ref{prop4} is provided in Section \ref{sec:uniqueness}.

Next, we state the convergence result in the general case.

\begin{thm}
\label{thm1}
Assume {\bf{(V1)}}, {\bf{(V2)}} and {\bf{(I)}} hold. Assume further that $\overline{\rho}\in BV(\R)$.  Then the approximated density $\{\rho^{n}\}_{n\in\mathbb{N}}$ defined in \eqref{12} converges, up to a subsequence, almost everywhere and in $L^{1}_{loc}$ on $\mathbb{R}\times [0,+\infty)$ to the unique \emph{extended} entropy solution to the Cauchy problem \eqref{s1} in the sense of Definition \ref{definition_extended_entropy_solution}.
\end{thm}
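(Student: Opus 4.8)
The plan is to combine uniform a priori estimates on the particle scheme with the uniqueness result of Theorem \ref{prop4}, extracting a convergent subsequence, identifying its limit as an \emph{extended} entropy solution, and then upgrading convergence of the subsequence to convergence of the whole sequence by uniqueness. First I would collect the uniform bounds. The discrete maximum principle gives $0\le R_i^n(t)\le \overline R$ for all $i,n,t$, so that $0\le \rho^n\le \overline R$ and $v,f$ are evaluated only on the compact set $[0,\overline R]$, where by {\bf{(V1)}} they are Lipschitz. Since $\dot x_i^n\in[v(\overline R),v_{\max}]$, the supports $[x_0^n(t),x_n^n(t)]$ remain in a fixed compact set on each $[0,T]$, yielding equi-compact support. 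The assumption $\overline\rho\in BV$ produces, as in case (i) of the introduction, a uniform-in-$n$ total variation bound $\TV(\rho^n(\cdot,t))\le \TV(\overline\rho)$; together with the Lipschitz bound on $f$ this gives an $L^1$-in-time equicontinuity estimate $\|\rho^n(\cdot,t)-\rho^n(\cdot,s)\|_{L^1}\le L|t-s|$ with $L=\mathrm{Lip}(f)\,\TV(\overline\rho)$ independent of $n$. By Helly's theorem in $x$ and time-equicontinuity (a Fréchet--Kolmogorov/Aubin--Lions type argument), a subsequence of $\{\rho^n\}$ converges in $L^1_{loc}(\R\times[0,+\infty))$ and a.e. to some $\rho$, which inherits $0\le\rho\le\overline R$, compact support, the spatial $BV$ bound, and the $L^1$-Lipschitz-in-time bound.

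Second, I would establish consistency, that is, $\rho$ solves $\rho_t+f(\rho)_x=0$ distributionally on $\R\times(0,+\infty)$. Testing against $\varphi\in C_c^\infty$, differentiating $\int\rho^n\varphi\,dx$ along \eqref{9} and summing by parts produces the weak equation up to a remainder controlled by the mesh size $\ell_n$ times the (uniformly bounded) total variation; hence the remainder vanishes as $n\to\infty$, and the strong $L^1_{loc}$ convergence together with the continuity of $f$ yields $\iint(\rho\varphi_t+f(\rho)\varphi_x)\,dx\,dt=0$. As emphasised in the paper, this step passes to the limit only in the \emph{linear} weak form and never uses Kruzkov's inequalities. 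The strong $L^1$ attainment of the initial datum follows from the time bound: passing $\|\rho^n(\cdot,t)-\overline\rho^n\|_{L^1}\le Lt$ to the limit and using $\overline\rho^n\to\overline\rho$ in $L^1$ gives $\|\rho(\cdot,t)-\overline\rho\|_{L^1}\le Lt\to0$, which is exactly the strong continuity at $t=0$ required by Definition \ref{definition_extended_entropy_solution} and the reason the $BV$ hypothesis is needed.

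Third comes the entropy selection, which I regard as the crux. Passing the discrete one-sided Lipschitz estimate \eqref{eq:discrete_oleinik_intro1} to the limit is the key point: since $v(\rho^n)\to v(\rho)$ a.e. and boundedly, the discrete difference quotients converge distributionally to $v(\rho)_x$, yielding the continuum condition \eqref{eq:continuum_oleinik_intro1}, namely $v(\rho)_x\le 1/t$ in $\mathcal D'$. To match Definition \ref{definition_extended_entropy_solution} I must upgrade this to $f'(\rho)_x\le C/t$, and here assumption {\bf{(V2)}} is decisive. Since $v$ is strictly decreasing one can write $f'(\rho)=g(v(\rho))$, and the secant bound \eqref{13} reads exactly $1\le \frac{g(a)-g(b)}{a-b}\le 1+K$, so $g$ is increasing and $(1+K)$-Lipschitz. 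Applying the $BV$ chain rule to $f'(\rho)=g(v(\rho))$ and splitting $D_x v(\rho)$ into its absolutely continuous and jump parts, the bound $v(\rho)_x\le1/t$ forces the jumps of $v(\rho)$ to be non-positive (so that, $g$ being increasing, the jumps of $f'(\rho)$ are non-positive too) and controls the absolutely continuous part by $(1+K)/t$; hence $f'(\rho)_x\le (1+K)/t$ in $\mathcal D'$, i.e. \eqref{def_entropy_extended} holds with $C=1+K\ge1$. The main obstacle is precisely this conversion: tracking the measure-theoretic decomposition and the sign of the jump part so that the \emph{weaker} control on $v$ yields the extended entropy condition on $f'$. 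Once this is in place, $\rho$ is an extended entropy solution of \eqref{s1}; by Theorem \ref{prop4} it is the unique one, so the limit is independent of the extracted subsequence and the whole sequence $\{\rho^n\}$ converges to it, which concludes the proof.
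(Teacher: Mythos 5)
Your proposal is correct in its overall architecture and matches the paper's strategy (compactness, consistency in the linear weak form, passage to the limit in the discrete one-sided estimate, conversion to the extended Hoff condition via {\bf{(V2)}}, uniqueness via Theorem \ref{prop4}), but two intermediate steps take genuinely different routes, and one of them is asserted with a justification that only applies to exact solutions. For compactness and the initial trace you use the spatial $\TV$ bound together with an $L^1$-Lipschitz-in-time estimate $\|\rho^n(\cdot,t)-\rho^n(\cdot,s)\|_{L^1}\leq L|t-s|$ with $L=\mathrm{Lip}(f)\,\TV(\overline{\rho})$; that constant is the one valid for exact weak solutions of the conservation law, whereas $\rho^n$ is only an approximate solution, so the estimate must be rederived from the ODEs \eqref{9} (differentiating $\int\rho^n\varphi\,dx$, using $|\dot{R}_i|(x_{i+1}-x_i)=R_i|v(R_{i+1})-v(R_i)|$ and Abel summation of the interface terms does yield a Lipschitz bound, but with a constant depending on $v_{\max}$, $\overline{R}$, $\mathrm{Lip}(v)$ and $\TV(\overline{\rho})$). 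The paper deliberately sidesteps this: it gets compactness on $\R\times[\delta,+\infty)$ from the one-sided estimate \eqref{eq:oleinik2} (uniform $BV$ of $v(\rho^n)$) plus the $1$-Wasserstein Lipschitz bound of Proposition \ref{prop:wasserstein} and an Aubin--Lions lemma, and it recovers the strong initial trace only with rate $t^{1/2}$ by interpolating $\|g\|_{L^1}\leq C\,\TV[g]^{1/2}\|G\|_{L^1}^{1/2}$ between the $\TV$ bound of Lemma \ref{lem:Bv} and the $W_1$ bound (Proposition \ref{prop:nearzero} and Lemma \ref{lem:nearzero}). Your route, once the time estimate is actually proved for the scheme, is legitimate and gives a better rate. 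Second, to upgrade $v(\rho)_x\leq 1/t$ to $f'(\rho)_x\leq(1+K)/t$ you invoke the $BV$ chain rule and a decomposition of $D_x v(\rho)$ into absolutely continuous and jump parts; the paper avoids all measure theory by using the equivalent finite-difference form of the inequality (Lemma \ref{prop2}, item {\bf{C)}}): one writes $\bigl(f'(\rho(x+z,t))-f'(\rho(x,t))\bigr)/\bigl(v(\rho(x+z,t))-v(\rho(x,t))\bigr)=1+\bigl(\phi(\rho(x+z,t))-\phi(\rho(x,t))\bigr)/\bigl(v(\rho(x+z,t))-v(\rho(x,t))\bigr)\in[1,1+K]$ by {\bf{(V2)}} and multiplies by $v(\rho(x+z,t))-v(\rho(x,t))\leq z/t$, which settles the matter in two lines. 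Your chain-rule argument reaches the same conclusion but is heavier than necessary. The remaining steps (weak formulation, uniqueness, and the subsequence-independence remark at the end) coincide with the paper's.
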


Our next result deals with the case
\begin{equation}\label{eq:v_special}
    v(\rho):=A-\rho^\gamma,\qquad \hbox{with $A,\gamma>0$}\,.
\end{equation}

As a first important result, we state the improved version of the one-sided Lipschitz estimate.

\begin{thm}[Improved one-sided Lipschitz condition]\label{prop:improved}
Let $v(\rho)=v_{\max}-\rho^\gamma$ with $\gamma>0$. Then, for all $t\geq 0$, for all $n\in\mathbb{N}$ and for all $i\in \{0,\dots, n-1\}$, we have
\begin{equation}
\label{1}
t\frac{v(R_{i+1}(t))-v(R_i(t))}{x_{i+1}(t)-x_{i}(t)}\leq \frac{1}{\gamma+1},
\end{equation}
which also reads
\begin{equation}
\label{1bis}
t\frac{f'(R_{i+1}(t))-f'(R_i(t))}{x_{i+1}(t)-x_{i}(t)}\leq 1.
\end{equation}
\end{thm}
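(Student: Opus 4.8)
The plan is to first collapse the two claimed inequalities into one, and then prove that one by a discrete maximum-principle (Riccati) argument on the difference quotient itself. Since $v(\rho)=v_{\max}-\rho^\gamma$ gives $f'(\rho)=v(\rho)+\phi(\rho)=v_{\max}-(\gamma+1)\rho^\gamma$, one has the pointwise identity $f'(\rho)=(\gamma+1)v(\rho)-\gamma v_{\max}$, whence $f'(R_{i+1})-f'(R_i)=(\gamma+1)\bigl(v(R_{i+1})-v(R_i)\bigr)$. Thus \eqref{1bis} is exactly $(\gamma+1)$ times \eqref{1}, and it suffices to establish \eqref{1}.

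To set up the argument I would introduce the gaps $y_i(t):=x_{i+1}(t)-x_i(t)$ and the quantity $z_i:=\dot y_i/y_i$. Since $\dot y_i=\dot x_{i+1}-\dot x_i=v(R_{i+1})-v(R_i)$ and $R_i=\ell_n/y_i$, the target quantity is precisely $z_i=\tfrac{v(R_{i+1})-v(R_i)}{x_{i+1}-x_i}$, so the claim reads $t\,z_i\le\tfrac{1}{\gamma+1}$. Differentiating $R_i=\ell_n/y_i$ gives the clean relation $\dot R_i=-R_i z_i$, hence with $u_i:=v(R_i)$ one finds $\dot u_i=-\phi(R_i)z_i=\gamma R_i^\gamma z_i$. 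Writing $\psi_i:=\gamma R_i^\gamma\ge0$ and using $\dot y_i=u_{i+1}-u_i$, a direct computation yields the evolution law
\[
\dot z_i=\frac{\psi_{i+1}z_{i+1}-\psi_i z_i}{y_i}-z_i^2 .
\]
The crucial algebraic fact, special to this flux, is that $R_i^\gamma=v_{\max}-u_i$, so that $\psi_{i+1}-\psi_i=\gamma(R_{i+1}^\gamma-R_i^\gamma)=-\gamma(u_{i+1}-u_i)=-\gamma z_i y_i$, with the convention $\psi_n=0$ forced by $\dot x_n=v_{\max}$.

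With these identities in hand I would control $M(t):=\max_{0\le i\le n-1}z_i(t)$. At an index $j$ realizing the maximum one has $z_{j+1}\le z_j$, so splitting $\psi_{j+1}z_{j+1}-\psi_j z_j=\psi_{j+1}(z_{j+1}-z_j)+(\psi_{j+1}-\psi_j)z_j$ the first term is $\le0$ (as $\psi_{j+1}\ge0$) and the second equals $-\gamma z_j^2 y_j$. Substituting into the evolution law gives $\dot z_j\le-\gamma z_j^2-z_j^2=-(\gamma+1)z_j^2$, i.e. $M$ satisfies the Riccati differential inequality $\tfrac{d}{dt}M\le-(\gamma+1)M^2$ in the sense of upper Dini derivatives. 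Comparison with the singular solution $t\mapsto\tfrac{1}{(\gamma+1)t}$ of the associated Riccati ODE then yields $M(t)\le\tfrac{1}{(\gamma+1)t}$ for every $t>0$ (the bound being vacuous at $t=0$ and automatic where $M\le0$), which is exactly \eqref{1}.

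The main obstacle is the rigorous justification of the maximum-principle step rather than the algebra. The function $M$ is only Lipschitz, so the differential inequality must be read in the Dini sense and checked at every maximizing index, including the boundary index $i=n-1$: there $\psi_n z_n=0$, and since $z_{n-1}y_{n-1}=u_n-u_{n-1}=R_{n-1}^\gamma$ one recovers exactly $\dot z_{n-1}=-(\gamma+1)z_{n-1}^2$, so no separate estimate is needed. To sidestep the non-smoothness of the maximum altogether, an alternative I would keep in reserve is to replace $M$ by a regularized functional $\sum_i g_\varepsilon(z_i)$ with $g_\varepsilon$ a smooth convex approximation of the positive part, differentiate, exploit the sign of $\psi_{i+1}-\psi_i$, and let $\varepsilon\to0$ — the same device used to streamline the general one-sided Lipschitz estimate. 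Finally, the passage from the Riccati inequality to the \emph{uniform-in-initial-data} bound $\tfrac{1}{(\gamma+1)t}$ is the instantaneous-smoothing feature and must be phrased as a comparison argument, independent of $M(0)$.
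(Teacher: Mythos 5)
Your argument is correct and reaches the same conclusion, but by a genuinely different route from the paper. The paper fixes the index and works with $D_i(t)=t\,z_i(t)$ (in your notation), proving $D_i\le\frac{1}{\gamma+1}$ by \emph{backward induction on the particle index}: the leader pair $i=n-1$ is handled first, and for $i\le n-2$ the inductive bound on $D_{i+1}$ is injected into $\dot D_i$, after which a regularised positive part $\eta_\delta$ and a first-crossing-time contradiction close the argument; since $D_i(0)=0$, no singular comparison is ever needed. You instead treat all indices simultaneously through $M(t)=\max_i z_i(t)$, exploit the sign of $\psi_{j+1}(z_{j+1}-z_j)$ at a maximising index together with the flux-specific identity $\psi_{i+1}-\psi_i=-\gamma z_i y_i$ (which plays exactly the role of the paper's added-and-subtracted term $\gamma n\eta'_\delta(D_k)R_kR_{k+1}^\gamma D_k$), and arrive at the Riccati inequality $D^+M\le-(\gamma+1)M^2$, whence $M(t)\le\frac{1}{(\gamma+1)t}$ by comparison, \emph{independently of the initial data}. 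Your version makes the instantaneous-smoothing mechanism and the special role of this flux more transparent; the price is the extra care needed with the Dini derivative of a maximum of finitely many $C^1$ functions and with the limiting comparison against the singular solution $1/((\gamma+1)t)$ (e.g.\ compare on $[t_0,t]$ and let $t_0\searrow0$, or argue by backward blow-up) --- both standard, and you correctly flag them as the points requiring rigour. The boundary index is handled exactly as you say ($\psi_n=0$ gives the identity $\dot z_{n-1}=-(\gamma+1)z_{n-1}^2$, matching the paper's computation of $\dot D_{n-1}$), and your reduction of \eqref{1bis} to \eqref{1} via $f'(\rho)=(\gamma+1)v(\rho)-\gamma v_{\max}$ is the same identity the paper invokes in its final line.
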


The proof of Theorem \ref{prop:improved} is provided in Section \ref{sec:improved}.

Finally, we state the convergence result in case \eqref{eq:v_special}.

\begin{thm}
\label{thm2}
Assume $v$ is as in \eqref{eq:v_special}, and assume that {\bf{(I)}} holds. Then the approximated density $\{\rho^{n}\}_{n\in\mathbb{N}}$ defined in \eqref{12} converges, up to a subsequence, almost everywhere and in $L^{1}_{loc}$ on $\mathbb{R}\times [0,+\infty)$ to the unique \emph{classical} entropy solution to the Cauchy problem \eqref{s1} in the sense of Definition \ref{definition_classical_entropy_solution}. 
\end{thm}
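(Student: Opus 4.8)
The plan is to mirror the convergence argument of the general case (Theorem \ref{thm1}), but to exploit the improved one-sided Lipschitz estimate of Theorem \ref{prop:improved} in order to land directly in the \emph{classical} entropy formulation of Definition \ref{definition_classical_entropy_solution}, and then to invoke Chen's and Rascle's uniqueness result \cite{chen_rascle} to identify the limit \emph{without} assuming $\overline{\rho}\in BV$. Concretely I would proceed in four stages: (1) uniform estimates and compactness; (2) consistency, i.e. the limit is a distributional solution on $\R\times(0,+\infty)$; (3) the sharp entropy inequality $f'(\rho)_x\leq 1/t$; (4) attainment of the initial datum in the weak-star measure sense together with uniqueness.

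For compactness I would first invoke the discrete maximum principle to bound $0\leq R_i^n(t)\leq \overline{R}$ uniformly in $n$ and $t$, which yields a uniform $L^\infty$ bound and, together with mass conservation and the bounded propagation speed $v(\overline{R})\leq \dot{x}_i^n\leq v_{\max}$, uniform compact support and tightness on finite time intervals. Next, reading \eqref{1} as the discrete one-sided bound $v(\rho^n(\cdot,t))_x\leq \tfrac{1}{(\gamma+1)t}$, I would control the \emph{positive} part of the spatial variation of $v(\rho^n(\cdot,t))$ on any slab $t\geq \tau>0$; since the total increase minus the total decrease telescopes to a bounded boundary term, the full spatial total variation of $v(\rho^n(\cdot,t))$ is uniformly bounded for $t\geq\tau$. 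I expect to close space-time compactness with a time-equicontinuity estimate (in $L^1$, or in the measure sense) coming from the speed bound and the equation, followed by a generalised Aubin--Lions/Helly argument and a diagonal extraction over $\tau\searrow 0$, producing a subsequence with $v(\rho^n)\to w$ a.e.\ and in $L^1_{loc}(\R\times(0,+\infty))$. Because $v$ is a strictly decreasing homeomorphism, $\rho^n=v^{-1}(v(\rho^n))$ then converges a.e.\ to $\rho:=v^{-1}(w)$ by continuity of $v^{-1}$, and the uniform $L^\infty$ bound with compact support upgrades this to $L^1_{loc}$ convergence by dominated convergence. Working with $v(\rho^n)$ rather than $\rho^n$ here neatly sidesteps the degeneracy of $v^{-1}$ and of $(f')^{-1}$ at $\rho=0$ that appears when $\gamma\neq 1$.

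For consistency I would pass to the limit in the discrete weak formulation associated with \eqref{9}: testing the piecewise-constant density against $\psi\in C_c^\infty(\R\times(0,+\infty))$ and using $\dot{x}_i^n=v(R_i^n)$, the scheme yields an approximate weak formulation whose error terms vanish as $n\to\infty$ thanks to the $L^1_{loc}$ convergence and the uniform bounds, giving $\rho_t+f(\rho)_x=0$ in $\mathcal{D}'(\R\times(0,+\infty))$. For the entropy inequality I would pass to the limit in \eqref{1bis}: the discrete statement encodes precisely $f'(\rho^n(\cdot,t))_x\leq 1/t$ in a discrete/distributional sense, and the a.e.\ convergence of $\rho^n$ with the continuity of $f'$ transfers it to the limit with the \emph{sharp} constant $1$, namely $f'(\rho)_x\leq 1/t$ in $\mathcal{D}'(\R\times[0,+\infty))$. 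This is the decisive gain over the general case and is exactly the requirement \eqref{def_entropy_classical}.

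Finally I would verify $\rho(\cdot,t)\to\overline{\rho}$ in the weak-star measure sense as $t\searrow 0$: since the mass $\ell_n$ carried by each particle stays concentrated near $x_i^n(t)$ with $|x_i^n(t)-\overline{x}_i^n|\leq v_{\max}\,t$, testing $\rho^n(\cdot,t)$ against $\psi\in C_c(\R)$ and using that the atomisation converges to $\overline{\rho}$ gives, after the double limit $n\to\infty$, $t\searrow 0$, convergence to $\int\overline{\rho}\,\psi$; this needs no $BV$ regularity, only the speed bound and mass conservation. At this point $\rho$ is a bounded weak solution satisfying \eqref{def_entropy_classical} and attaining $\overline{\rho}$ in the measure sense, so the concave-flux uniqueness theorem of \cite{chen_rascle} applies, identifies $\rho$ as \emph{the} classical entropy solution, and shows that every convergent subsequence selects the same limit. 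I expect the main obstacle to be the compactness near $t=0$ in the absence of a $BV$ bound on $\overline{\rho}$: the one-sided estimate degenerates like $1/t$ there, so uniform spatial $BV$ bounds are only available on $t\geq\tau$, and one must be content with $L^1_{loc}$ convergence on $(0,+\infty)$ and a merely weak-star measure trace at $t=0$---precisely the regularity framework that makes \cite{chen_rascle}, rather than Theorem \ref{prop4}, the correct uniqueness tool in this case.
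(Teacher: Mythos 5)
Your proposal is correct and follows essentially the same route as the paper: $L^\infty$ and support bounds from the discrete maximum principle, spatial $BV$ compactness of $v(\rho^n)$ on slabs $t\geq\tau$ from the one-sided estimate combined with time equicontinuity and an Aubin--Lions/diagonal argument (Proposition \ref{prop:compactness}), consistency by passing to the limit in the discrete weak formulation (Proposition \ref{prop:weak}), the sharp condition \eqref{def_entropy_classical} obtained by sending \eqref{1bis} to the limit (Proposition \ref{prop3}), the measure-sense trace at $t=0$ via the bounded speed (which is exactly the content of the $1$-Wasserstein equicontinuity in Proposition \ref{prop:wasserstein}), and uniqueness from \cite{chen_rascle}. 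Your closing remark about why the $BV$-free framework forces the weak-star trace and makes \cite{chen_rascle} the right uniqueness tool matches the paper's reasoning precisely.
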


Theorems \ref{thm1} and \ref{thm2} are proven in detail in Section \ref{sec:conclusion}, based on the estimates proven in Sections \ref{sec:proof1} and \ref{sec:improved}.

\section{Proof of the uniqueness result}\label{sec:uniqueness}

In this section we prove the uniqueness of extended entropy solutions in the sense of Definition \ref{definition_extended_entropy_solution} stated in Theorem \ref{prop4}. As mentioned in the introduction, this proof follows the lines of the uniqueness proof in \cite{Hoff}. 
We first introduce some notations.

\begin{definition}
Given a function $f$ and three distinct real numbers $a,b,c$, we define the divided differences $f[a,b]$ and $f[a,b,c]$ respectively as
\begin{equation*}
f[a,b]:=\frac{f(a)-f(b)}{a-b} \quad \text{and} \quad f[a,b,c]:=\frac{f[a,b]-f[b,c]}{a-c}.
\end{equation*}
Moreover, for $f\in C^1$ and $f\in C^2$ we define respectively the divided differences 
\begin{equation*}
f[a,a]:=f'(a) \quad \text{and} \quad f[a,b,b]:=\frac{f''(d)}{2} \text{ for some } d\in \Conv(a,b).
\end{equation*}
\end{definition}

\begin{proof}[Proof of Theorem \ref{prop4}]
Let $\rho, \tilde{\rho}$ two solutions satisfying Definition \ref{definition_extended_entropy_solution}, let $0<t_1<t_2$ arbitrarily fixed and let us denote with $e:=\rho-\tilde{\rho}$. Multiplying (in a weak sense) the conservation law in \eqref{s1} by $\phi \mathbf{1}^\sigma_{[t_1,t_2]}$ where $\phi\in C^\infty(\mathbb{R}\times\mathbb{R}_+)$ is such that $\supp \phi(\cdot,t) \cap \{t_1\leq t\leq t_2\}$ is bounded, $\mathbf{1}^\sigma_{[t_1-\sigma,t_2+\sigma]}$ is a smooth approximation of $\mathbf{1}_{[t_1,t_2]}$, and by letting $\sigma\searrow 0$ (this is a standard procedure, we omit the details), we get
\begin{equation*}
\int_{\mathbb{R}}\rho(x,t)\phi(x,t)dx\biggl]_{t_1}^{t_2}=\int_{t_1}^{t_2}\int_{\mathbb{R}}\bigl(\rho(x,t)\phi_t(x,t)+f(\rho(x,t))\phi_x(x,t)\bigr)dx dt.
\end{equation*}
Writing the same identity for $\tilde{\rho}$ and subtracting term by term, we have that $e$ satisfies
\begin{equation}
\label{3}
\int_{\mathbb{R}}e(x,t)\phi(x,t)dx\biggl]_{t_1}^{t_2}=\int_{t_1}^{t_2}\int_{\mathbb{R}}e(x,t)\bigl(\phi_t(x,t)+f[\rho(x,t),\tilde{\rho}(x,t)]\phi_x(x,t)\bigr)dx dt.
\end{equation}
We now choose a suitable function $\phi$. Let us fix a function $\psi\in C_c^\infty (\mathbb{R})$ and we define
\begin{equation*}
\psi^{\pm}(x):=\frac{\psi(x)}{2}\pm\frac{1}{2}\int_{-\infty}^{x}|\psi'(s)|ds,
\end{equation*}
which satisfy $\psi= \psi^++\psi^-$,
\begin{equation}
\label{7}
(\psi^+)'=\frac{\psi'+|\psi'|}{2}\geq 0 \quad \text{and} \quad (\psi^-)'=\frac{\psi'-|\psi'|}{2}\leq 0.
\end{equation}
Moreover, we introduce two constants $\varepsilon, \delta >0$, two mollifiers $j_\varepsilon, j_\delta$ with $j_\varepsilon\in C_c^{\infty}(\mathbb{R}_+)$, $j_\delta\in C_c^{\infty}(\mathbb{R})$, and we consider the unique solutions $\phi_{\varepsilon,\delta}^+$ and $\phi_{\varepsilon,\delta}^-$ respectively of
\begin{equation}
\label{4}
\begin{cases} 
\phi_t+\bigl((j_\varepsilon * f')(\tilde{\rho})\bigr)\phi_x=0, \quad & (x,t)\in \mathbb{R}\times (t_1,t_2),
\\
\phi(x,t_2)=(j_\delta*\psi^+)(x), \quad & x\in \mathbb{R},
\end{cases}
\end{equation}
and 
\begin{equation}
\label{6}
\begin{cases} 
\phi_t+\bigl((j_\varepsilon * f')(\rho)\bigr)\phi_x=0, \quad & (x,t)\in \mathbb{R}\times (t_1,t_2),
\\
\phi(x,t_2)=(j_\delta*\psi^-)(x), \quad & x\in \mathbb{R}.
\end{cases}
\end{equation}
By the maximum principle for linear transport equations, we notice that
\begin{equation}
\label{8}
\bigl|\bigl|\phi^{\pm}_{\varepsilon,\delta}\bigr|\bigr|_{L^\infty}\leq \bigl|\bigl|j_\delta*\psi^{\pm}\bigr|\bigr|_{L^{\infty}}=\bigl|\bigl|\psi^{\pm}\bigr|\bigr|_{L^\infty}.
\end{equation}
Furthermore, the function $\phi_{\varepsilon,\delta}:=\phi^+_{\varepsilon,\delta}+\phi^-_{\varepsilon,\delta}$ is smooth and $\supp \phi_{\varepsilon,\delta}(\cdot,t)\cap \{t_1\leq t\leq t_2\}$ is bounded, hence we can take $\phi=\phi_{\varepsilon,\delta}$ in \eqref{3} and get
\begin{equation}
\label{5}
\begin{split}
&\int_{\mathbb{R}}e(x,t_2)(j_\delta*\psi)(x)dx-\int_{\mathbb{R}}e(x,t_1)\phi_{\varepsilon,\delta}(x,t_1)dx
\\
=&\int_{t_1}^{t_2}\int_{\mathbb{R}}e(x,t)\biggl(\frac{\partial \phi_{\varepsilon,\delta}}{\partial t}(x,t)+f[\rho(x,t),\tilde{\rho}(x,t)]\frac{\partial \phi_{\varepsilon,\delta}}{\partial x}(x,t)\biggr)dx dt
\\
:=&A+B,
\end{split}
\end{equation}
with 
\begin{equation*}
A:=\int_{t_1}^{t_2}\int_{\mathbb{R}}e(x,t)\Bigl(f[\rho(x,t),\tilde{\rho}(x,t)]-(j_\varepsilon * f')(\tilde{\rho})\Bigr)\frac{\partial \phi^+_{\varepsilon,\delta}}{\partial x}(x,t)dx dt,
\end{equation*}
\begin{equation*}
B:=\int_{t_1}^{t_2}\int_{\mathbb{R}}e(x,t)\Bigl(f[\rho(x,t),\tilde{\rho}(x,t)]-(j_\varepsilon * f')(\rho)\Bigr)\frac{\partial \phi^-_{\varepsilon,\delta}}{\partial x}(x,t)dx dt.
\end{equation*}
Now, we want to estimate the norm of $\dfrac{\partial \phi^\pm_{\varepsilon,\delta}}{\partial x}$. Denoting for convenience $w^{+}:=\rho, w^-:=\tilde{\rho}$, and differentiating the equations in \eqref{4} and \eqref{6} with respect to $x$, we get
\begin{equation*}
\frac{\partial}{\partial t}\frac{\partial \phi^\pm_{\varepsilon,\delta}}{\partial x}+(j_\varepsilon * f')(w^\mp)\frac{\partial}{\partial x}\frac{\partial \phi^\pm_{\varepsilon,\delta}}{\partial x}=-\biggl(j_\varepsilon * \frac{\partial f'}{\partial x}\biggr)(w^\mp)\frac{\partial \phi^\pm_{\varepsilon,\delta}}{\partial x}.
\end{equation*}
Hence, the characteristic curve $x(t)$ passing through some point $y\in \mathbb{R}$ at time $t=t_2$ is given respectively by
\begin{equation*}
\begin{cases}
\dot{x}(t)= \bigl(j_\varepsilon * f'\bigr)(w^\mp(x(t),t)),
\\
x(t_2)=y.
\end{cases}
\end{equation*}
As a consequence the functions $\dfrac{\partial \phi^\pm_{\varepsilon,\delta}}{\partial x}$, evaluated along $x=x(t)$, satisfy the equation
\begin{equation*}
\dfrac{d}{dt}\biggl[\frac{\partial \phi^\pm_{\varepsilon,\delta}}{\partial x}(x(t),t)\biggr]=-\biggl(j_\varepsilon * \frac{\partial f'}{\partial x}\biggr)(w^\mp(x(t),t))\frac{\partial \phi^\pm_{\varepsilon,\delta}}{\partial x}(x(t),t),
\end{equation*}
whose solution is given, for any $t\in (t_1,t_2)$, by
\begin{equation*}
\begin{split}
\frac{\partial \phi^\pm_{\varepsilon,\delta}}{\partial x}(x(t),t)=& \frac{\partial \phi^\pm_{\varepsilon,\delta}}{\partial x}(y,t_2)\exp\biggl[\int_{t}^{t_2}\biggl(j_\varepsilon * \frac{\partial f'}{\partial x}\biggr)(w^\mp(x(s),s)) ds\biggr]
\\
=&(j_{\delta}*\psi^\pm)'(y)\exp\biggl[\int_{t}^{t_2}\biggl(j_\varepsilon * \frac{\partial f'}{\partial x}\biggr)(w^\mp(x(s),s)) ds\biggr].
\end{split}
\end{equation*}
Since $(j_{\delta}*\psi^\pm)'=j_{\delta}*(\psi^\pm)'$ and due to \eqref{7}, we have
\begin{equation*}
\|(j_{\delta}*\psi^\pm)'\|_{L^\infty}\leq \|(\psi^\pm)'\|_{L^\infty} \quad \text{and} \quad (j_{\delta}*\psi^-)'\leq 0\leq  (j_{\delta}*\psi^+)'.
\end{equation*}
Moreover, since \eqref{def_entropy_extended} implies
\begin{equation*}
\exp\biggl[\int_{t}^{t_2}\biggl(j_\varepsilon * \frac{\partial f'}{\partial x}\biggr)(w^\mp(x(s),s)) ds\biggr]\leq \exp \biggl[C\int_{t}^{t_2}\frac{ds}{s}\biggr]=\biggl(\frac{t_2}{t}\biggr)^C,
\end{equation*}
then it finally follows
\begin{equation}
\label{10}
\left\|\frac{\partial \phi^{\pm}_{\varepsilon,\delta}}{\partial x}(x,t)\right\|_{L^\infty}\leq \|(\psi^\pm)'\|_{L^\infty}\biggl(\frac{t_2}{t}\biggr)^C.
\end{equation}
Let us now estimate $A$ (an analogous bound can be derived also for $B$). For $\eta>0$, we introduce $f_\eta(\rho):= (j_{\eta}*f)(\rho)$, where $j_\eta$ is a mollifier such that $f''_\eta\leq 0$, $f_\eta\to f,$ and $f'_\eta\to f'$ uniformly on bounded sets as $\eta\to 0$. Then it holds
\begin{equation*}
\begin{split}
A= & \int_{t_1}^{t_2}\int_{\mathbb{R}}e(x,t)\Bigl(f[\rho,\tilde{\rho}]-f_\eta [\rho,\tilde{\rho}]\Bigr)\frac{\partial \phi^+_{\varepsilon,\delta}}{\partial x}(x,t)dx dt+\int_{t_1}^{t_2}\int_{\mathbb{R}}e(x,t)\Bigl(f_\eta
[\rho,\tilde{\rho}]-f_\eta[\tilde{\rho},\tilde{\rho}]\Bigr)\frac{\partial \phi^+_{\varepsilon,\delta}}{\partial x}(x,t)dx dt
\\
&+\int_{t_1}^{t_2}\int_{\mathbb{R}}e(x,t)\Bigl(f_\eta'(\tilde{\rho})-f'(\tilde{\rho})\Bigr)\frac{\partial \phi^+_{\varepsilon,\delta}}{\partial x}(x,t)dx dt+\int_{t_1}^{t_2}\int_{\mathbb{R}}e(x,t)\Bigl(f'(\tilde{\rho})-\bigl(j_\varepsilon * f')(\tilde{\rho})\Bigr)\frac{\partial \phi^+_{\varepsilon,\delta}}{\partial x}(x,t)dx dt,
\end{split}
\end{equation*}
where the first and the third integrals go to zero as $\eta\to 0$ by the choice of $f_\eta$ whereas, since $f''_\eta\leq 0$ and $f_\eta[\rho,\tilde{\rho}]-f_\eta [\tilde{\rho},\tilde{\rho}]=f_\eta[\rho,\tilde{\rho},\tilde{\rho}](\rho-\tilde{\rho})=e f_\eta''(\zeta)/2$ for some $\zeta\in \Conv(\rho,\tilde{\rho})$, the second integral satisfies
\begin{equation*}
\int_{t_1}^{t_2}\int_{\mathbb{R}}e(x,t)\Bigl(f_\eta[\rho,\tilde{\rho}]-f_\eta [\rho,\rho]\Bigr)\frac{\partial \phi^+_{\varepsilon,\delta}}{\partial x}(x,t)dx dt=\int_{t_1}^{t_2}\int_{\mathbb{R}}e^2(x,t)\frac{f''_{\eta}(\zeta)}{2} \frac{\partial \phi^+_{\varepsilon,\delta}}{\partial x}(x,t)dx dt\leq 0.
\end{equation*}
Therefore, applying first \eqref{10} and then \eqref{8} in \eqref{5}, we have
\begin{equation*}
\begin{split}
\int_{\mathbb{R}}e(x,t_2)(j_\delta*\psi)(x)dx\leq & \int_{\mathbb{R}}e(x,t_1)\phi_{\varepsilon,\delta}^+(x,t_1)dx+\int_{\mathbb{R}}e(x,t_1)\phi_{\varepsilon,\delta}^-(x,t_1)dx
\\
&+||e||_{L^\infty}\biggl\{o_\eta(1)+\bigl|\bigl|(\psi^+)'\bigr|\bigr|_{L^\infty}\biggl(\frac{t_2}{t_1}\biggr)^C\bigl|\bigl|f'(\tilde{\rho})-(j_\varepsilon * f')(\tilde{\rho})\bigr|\bigr|_{L^1(\Omega)}\biggr\}
\\
&+||e||_{L^\infty}\biggl\{o_\eta(1)+\bigl|\bigl|(\psi^-)'\bigr|\bigr|_{L^\infty}\biggl(\frac{t_2}{t_1}\biggr)^C\bigl|\bigl|f'(\rho)-(j_\varepsilon * f')(\rho)\bigr|\bigr|_{L^1(\Omega)}\biggr\}
\\
\leq & \ ||e(\cdot,t_1)||_{L^1} \bigl(||\psi^+||_{L^\infty}+||\psi^-||_{L^\infty}\bigr)+||e||_{L^\infty} \ o_\eta(1)
\\
&+ ||e||_{L^\infty}\biggl(\frac{t_2}{t_1}\biggr)^C\biggl\{\bigl|\bigl|(\psi^+)'\bigr|\bigr|_{L^\infty} \ \bigl|\bigl|f'(\tilde{\rho})-(j_\varepsilon * f')(\tilde{\rho})\bigr|\bigr|_{L^1(\Omega)}
\\
&\hspace{2.9cm}+\bigl|\bigl|(\psi^-)'\bigr|\bigr|_{L^\infty} \ \bigl|\bigl|f'(\rho)-(j_\varepsilon * f')(\rho)\bigr|\bigr|_{L^1(\Omega)}\biggr\},
\end{split}
\end{equation*}
where $\Omega$ is a suitable bounded set depending on $j_\varepsilon$. Therefore, if we let $\eta,\varepsilon,t_1$ and $\delta$ go to zero in this order, we finally get
\begin{equation*}
\int_{\mathbb{R}}e(x,t_2)\psi(x)dx\leq 0 \quad \text{for all } \psi\in C_c^\infty(\mathbb{R}),
\end{equation*}
which implies that $\rho(\cdot,t_2)=\tilde{\rho}(\cdot,t_2)$ almost everywhere.
\end{proof}

\section{Estimates on the particles  system in the general case}\label{sec:proof1}

In this section we provide the main technical results needed to prove the convergence results in  Theorems \ref{thm1} and \ref{thm2}. These results are already proven in \cite{DiFrancescoRosini}. As mentioned in the introduction, we shall provide alternative proofs to them in order to make the paper self contained and with the goal of partly making those proofs clearer with respect to \cite{DiFrancescoRosini}. We shall assume throughout this whole section that {\bf{(V1)}}, {\bf{(V2)}}, and {\bf{(I)}} are satisfied. Further assumptions will be stated if necessary.

We start by proving the discrete maximum principle in the spirit of \cite{DiFrancescoRosini} but following an alternative, direct proof. We assume here $t\geq 0$ is arbitrary. Clearly, the proof applies only for $t$ lying in a \enquote{local existence} time interval $[0,T)$, and as a byproduct of the result one obtains global-in-time existence for \eqref{9} and that the statement below holds for all times.

\begin{prop}[Discrete Maximum Principle]\label{prop:MP}
For all $t\geq 0$ we have
\begin{equation}\label{eq:CL_maximum}
    x_{k+1}(t)-x_k(t)\geq \min_{k=0,\ldots,n-1}(\bar{x}_{k+1}-\bar{x}_k).
\end{equation}
\end{prop}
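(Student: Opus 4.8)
The plan is to track the inter-particle gaps and prove that their minimum never decreases. Set $d_i(t):=x_{i+1}(t)-x_i(t)$ for $i\in\{0,\dots,n-1\}$, so that $R_i=\ell_n/d_i$ and the right-hand side of \eqref{9} is well defined as long as every $d_i$ stays positive. Differentiating $d_i$ and using \eqref{9} gives
\[
\dot d_i = v(R_{i+1})-v(R_i)\quad (0\le i\le n-2),\qquad \dot d_{n-1}=v_{\max}-v(R_{n-1}).
\]
The estimate \eqref{eq:CL_maximum} is exactly the assertion that $m(t):=\min_{i}d_i(t)$ satisfies $m(t)\ge m(0)$, which in turn encodes the density bound $R_i(t)\le \ell_n/m(0)$.

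The heart of the argument is a sign observation that uses only the strict monotonicity of $v$ from \textbf{(V1)}. First, since $R_{n-1}\ge 0$ and $v$ is decreasing, $\dot d_{n-1}=v(0)-v(R_{n-1})\ge 0$ unconditionally: the rear gap never shrinks. Second, if an interior index $i\in\{0,\dots,n-2\}$ realises the minimum at some time $t_0$, i.e. $d_i(t_0)\le d_{i+1}(t_0)$, then $R_i(t_0)=\ell_n/d_i(t_0)\ge \ell_n/d_{i+1}(t_0)=R_{i+1}(t_0)$, whence $v(R_{i+1}(t_0))\ge v(R_i(t_0))$ and therefore $\dot d_i(t_0)\ge 0$. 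In other words, whenever a gap is among the smallest, its length is momentarily non-decreasing.

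To convert this pointwise information into the global bound $m(t)\ge m(0)$ I would pass to the right-hand (Dini) derivative of the minimum, and this is the step I expect to be the main technical obstacle, since $m$ is only Lipschitz and the minimising index may switch in time. Writing $I(t_0):=\{i:d_i(t_0)=m(t_0)\}$ for the active set, a standard Danskin-type computation (inactive gaps stay strictly above $m(t_0)$ for short times, so for $h\downarrow 0$ the minimum is attained among active indices) yields $D^+m(t_0)=\min_{i\in I(t_0)}\dot d_i(t_0)$. By the previous paragraph every active $\dot d_i(t_0)\ge 0$, so $D^+m(t_0)\ge 0$ for all $t_0$; since a continuous function with non-negative right Dini derivative everywhere is non-decreasing, I conclude $m(t)\ge m(0)$, which is \eqref{eq:CL_maximum}. (One could avoid Dini derivatives by replacing $m$ with a smooth approximation of the minimum, or dually a regularised maximum of the $R_i$, and passing to the limit, in the spirit of the later one-sided Lipschitz estimate.)

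Finally, for the existence remark preceding the statement: as long as no collision has occurred, $\rho\mapsto v(\ell_n/\rho)$ is $C^1$, so \eqref{9} admits a unique $C^1$ solution on a maximal interval $[0,T)$. The atomisation construction gives $R_i(0)=\ell_n/(\overline{x}_{i+1}-\overline{x}_i)\le \overline R$ for every $i$, hence $\ell_n/m(0)=\max_i R_i(0)\le\overline R$; the bound just proven then shows $d_i(t)\ge m(0)>0$ and $R_i(t)\le\overline R$ uniformly on $[0,T)$, so no collision can occur in finite time and the solution extends globally. Thus \eqref{eq:CL_maximum} holds for all $t\ge 0$.
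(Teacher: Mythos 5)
Your proof is correct, and it reaches the conclusion by a genuinely different technical route than the paper, even though both arguments hinge on the same sign observation (by monotonicity of $v$, a gap that is no larger than its right neighbour satisfies $v(R_{i+1})-v(R_i)\ge 0$, and the last gap is unconditionally non-decreasing). The paper proves the bound gap by gap via \emph{backward induction}: assuming $x_{k+2}-x_{k+1}\ge\Delta_{\min}$ for all time, it compares the $k$-th gap against the fixed threshold $\Delta_{\min}$ by monitoring $\eta_\varepsilon\bigl(\tfrac{1}{n\Delta}-\tfrac{1}{n\Delta_{\min}}\bigr)$ for a smooth regularisation $\eta_\varepsilon$ of the positive part, which keeps every quantity differentiable and avoids any nonsmooth-minimum machinery. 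You instead work directly with the running minimum $m(t)=\min_i d_i(t)$ and its right Dini derivative, using the Danskin-type identity $D^+m(t_0)=\min_{i\in I(t_0)}\dot d_i(t_0)$; the induction disappears because the global minimiser at time $t_0$ is automatically no larger than its right neighbour \emph{at that same time}, which is all the sign argument needs. What each approach buys: yours is shorter and makes the mechanism (the smallest gap cannot shrink) more transparent, at the price of invoking the Dini-derivative/Danskin facts for a Lipschitz minimum of finitely many $C^1$ functions; the paper's is longer but stays entirely within smooth calculus, in the same style as its later one-sided Lipschitz estimates. One point worth making explicit in your write-up: the monotonicity of $v$ is only assumed on $[0,\overline R]$, so the sign argument requires $R_i(t)\le\overline R$, which is itself a consequence of the bound being proved; this should be closed by a continuation argument on the maximal interval where $m(t)\ge m(0)$ holds (your final paragraph essentially does this, and the paper is equally brief on the point).
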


\begin{proof}
We denote
\[\Delta_{\min}:=\min_{k=0,\ldots,n-1}(\bar{x}_{k+1}-\bar{x}_k).\]

We start by estimating the distance between the two particles $x_n$ and $x_{n-1}$. For some $t\geq 0$ we integrate the equations for $x_n$ and $x_{n-1}$ on $[0,t]$ in \eqref{9} and take their difference:
\begin{align*}
     x_n(t)-x_{n-1}(t)=&\bar{x}_n-\bar{x}_{n-1} + \int_0^t\left[v_{\max}- v\left(\frac{1}{n(x_n(s)-x_{n-1}(s))}\right)\right]ds \geq  \bar{x}_n-\bar{x}_{n-1} \geq \Delta_{\min},
\end{align*}
since $v$ is monotone non-increasing on $[0,+\infty)$ by assumption {\bf{(V1)}}. 

Now, for all $k\in \{0,\ldots,n-2\}$, we prove \eqref{eq:CL_maximum} by \enquote{backward induction}. Let $k\in \{0,\ldots,n-2\}$ and let us assume 
\begin{equation}\label{eq:MP_induction}
    \min_{t\geq 0}\bigl(x_{k+2}(t)-x_{k+1}(t)\bigr)\geq \Delta_{\min},
\end{equation}
which implies
\begin{equation}\label{eq:MP_induction2}
    R_{k+1}(t)\leq \frac{1}{n \Delta_{\min}}\qquad \hbox{for all $t\geq 0$}.
\end{equation}
We set
\[\Delta(t):=x_{k+1}(t)-x_k(t),\qquad Y(t)=\frac{1}{n\Delta(t)}-\frac{1}{n\Delta_{\min}}.\]
Recall the positive part function $(z)_+=\max\{z,0\}$
and consider its regularisation
\[
\eta_\varepsilon (z)=
\begin{cases}
0 & \hbox{for $z\leq 0$},\\
\dfrac{z^2}{2\varepsilon} & \hbox{for $0\leq z\leq \varepsilon$},\\
z-\dfrac{\varepsilon}{2} & \hbox{for $z\geq \varepsilon$}.
\end{cases}
\]
We compute
\begin{align*}
    & \frac{d}{dt}\eta_{\varepsilon}(Y(t))  = \eta'_{\varepsilon}(Y(t))\dot{Y}(t)= -\eta'_{\varepsilon}(Y(t))\frac{1}{n\Delta(t)^2}\bigl(v(R_{k+1}(t))-v(R_k(t))\bigr).
\end{align*}
Now, since $\eta'_\varepsilon(Y)$ is zero on $Y\leq 0$, the right-hand side above is non-zero only if $\Delta(t)\leq \Delta_{\min}$, which is equivalent to $R_k(t)\geq \frac{1}{n\Delta_{\min}}$. Then, \eqref{eq:MP_induction2} implies
\[R_{k+1}(t)\leq  \frac{1}{n\Delta_{\min}} \leq R_k(t),\]
which due to ${\bf{(V1)}}$ implies
\[v(R_{k+1}(t))-v(R_k(t))\geq 0,\]
and therefore $\eta_\varepsilon(Y(t))$ is non-increasing in time. By letting $\varepsilon\searrow 0$, we obtain
\[(Y(t))_+\leq (Y(0))_+ =\left(\frac{1}{n\Delta(0)}- \frac{1}{n\Delta_{\min}}\right)_+ = 0\]
by the definition of $\Delta_{\min}$. Hence, $Y(t)\leq 0$ for all $t\geq 0$, which implies 
\[\Delta(t)\geq \Delta_{\min},\]
or equivalently
\[\min_{t\geq 0}\bigl(x_{k+1}(t)-x_k(t)\bigr)\geq \Delta_{\min}.\]
This concludes the proof.
\end{proof}

The result in Proposition \ref{prop:MP} guarantees the following property 
\begin{equation}\label{eq:maximum}
    R^n_i(t)\leq R,\qquad \hbox{for all $n\in \mathbb{N}$, for all $i\in\{0,\ldots,n-1\}$ and for all $t\geq 0$},
\end{equation}
where $\overline{R}$ is defined in the assumption {\bf{(I)}} as the $L^\infty$ norm of $\overline{\rho}$. Property \eqref{eq:maximum} gives the uniform estimate
\[\sup_{t\geq 0}\|\rho^n(\cdot,t)\|_{L^\infty(\R)}\leq \overline{R}.\]

Having assumed that $\bar{\rho}$ has compact support, we immediately get uniform-in-$n$ estimate of the measure of the support of $\rho^n(\cdot,t)$, more precisely
\begin{equation}\label{eq:support}
    \mathrm{supp}(\rho^n(\cdot,t)) = [\overline{x}_{\min},\overline{x}_{\max}+tv_{\max}].
\end{equation}

We now provide the key one-sided estimate \eqref{eq:discrete_oleinik_intro1} along the lines of the one proven in \cite{DiFrancescoRosini}.

\begin{prop}[One-sided Lipschitz estimate, general case]\label{prop:oleinik1}
For all $t\geq 0$, for all $n\in \mathbb{N}$ and for all $i\in\{0,\ldots,n-1\}$, we have
\begin{equation}\label{eq:oleinik2}
   t \frac{v(R_{i+1}(t))-v(R_i(t))}{x_{i+1}(t)-x_i(t)} \leq 1.
\end{equation}
\end{prop}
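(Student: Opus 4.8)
The plan is to derive a differential inequality for the discrete velocity difference quotient and then integrate it. I introduce the notation
\[
w_i(t):=v(R_i(t)),\qquad z_i(t):=\frac{w_{i+1}(t)-w_i(t)}{x_{i+1}(t)-x_i(t)}=\frac{\dot{x}_{i+1}(t)-\dot{x}_i(t)}{x_{i+1}(t)-x_i(t)},
\]
using the follow-the-leader equations \eqref{9} to rewrite the numerator as a difference of particle velocities. The goal \eqref{eq:oleinik2} then reads $t\,z_i(t)\le 1$, so it is natural to aim for a Riccati-type bound $\dot{z}_i\le -z_i^2$, which upon integration gives exactly $z_i(t)\le 1/t$ regardless of the (possibly large) initial data, reproducing the characteristic smoothing rate $1/t$.

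First I would compute $\dot{z}_i$ directly. Differentiating the quotient, the denominator derivative produces $-z_i\cdot(\dot{x}_{i+1}-\dot{x}_i)/(x_{i+1}-x_i)=-z_i^2$, which is precisely the term we want. The remaining piece comes from differentiating the numerator $\dot{w}_{i+1}-\dot{w}_i$; here I use $\dot{w}_i=\frac{d}{dt}v(R_i)=v'(R_i)\dot{R}_i$ and the formula for $\dot{R}_i$ obtained from $R_i=\ell_n/(x_{i+1}-x_i)$, namely $\dot{R}_i=-R_i\,(\dot{x}_{i+1}-\dot{x}_i)/(x_{i+1}-x_i)=-R_i\,z_i\,(x_{i+1}-x_i)/\ell_n$ after re-expressing $w_{i+1}-w_i$ through $z_i$. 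This will let me write $\dot{w}_i=R_i v'(R_i)\cdot(\text{difference of adjacent }w)/(x_{i+1}-x_i)=\phi(R_i)\cdot(\cdots)$, bringing in the function $\phi$ from \eqref{eq:phi_def}. Assembling these, $\dot{z}_i$ should take the form $-z_i^2$ plus a coupling term involving $\phi(R_i)$, $\phi(R_{i+1})$ and the neighbouring quotients $z_{i-1},z_{i+1}$.

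The main obstacle is controlling that coupling term: unlike a scalar Riccati equation, $z_i$ interacts with its neighbours, so the naive bound fails pointwise in $i$. The standard remedy is to argue at the maximiser: fix $t$ and let $i$ be an index achieving $\max_i z_i(t)$ (or work with $\max_i(t\,z_i(t))$ and a comparison/approximation argument to handle non-differentiability of the max and the boundary index $i=n-1$, where $R_n$ is not defined and one uses $\dot{x}_n=v_{\max}$). At such an index, monotonicity gives $z_{i+1}\le z_i$ and $z_{i-1}\le z_i$, and here assumption {\bf{(V2)}}—equivalently $\phi$ non-increasing with the bounded ratio $\phi[\,\cdot\,]/v[\,\cdot\,]\le K$—is exactly what makes the coupling term have a favourable sign, so that the surviving inequality reduces to $\dot{z}_i\le -z_i^2$ at the maximum. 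Concretely, the ratio $\phi(R_{i+1})-\phi(R_i)$ over $v(R_{i+1})-v(R_i)$ lying in $[0,K]$ lets me dominate the $\phi$-contribution by a nonpositive multiple of $z_i^2$ (or a manifestly nonpositive quantity at the maximiser). Once the differential inequality $\frac{d}{dt}\max_i z_i\le -(\max_i z_i)^2$ is established in this weak/maximum sense, a comparison with the solution $y(t)=1/t$ of $\dot y=-y^2$—valid for arbitrary, even infinite, initial slope—yields $\max_i z_i(t)\le 1/t$, which is \eqref{eq:oleinik2}. I would close by remarking that the regularisation of the positive part used in Proposition \ref{prop:MP} can again be employed to make the maximum-index argument rigorous.
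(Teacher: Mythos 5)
Your plan is workable but follows a genuinely different route from the paper, and it rests on one inaccurate structural claim. First the inaccuracy: writing $z_i=\dot\Delta_i/\Delta_i$ with $\Delta_i=x_{i+1}-x_i$, one finds $\dot z_i=-z_i^2+\frac{1}{\Delta_i}\bigl[\phi(R_i)z_i-\phi(R_{i+1})z_{i+1}\bigr]$ (with the convention $R_n=0$), so the coupling is \emph{only forward}, to $z_{i+1}$; the term $z_{i-1}$ you anticipate never appears, because $R_i$ and $\dot R_i$ depend only on $x_i,x_{i+1},x_{i+2}$. This one-sidedness is exactly what the paper exploits: instead of taking a maximum over $i$, it works with $D_i:=t\,z_i$ (so that $D_i(0)=0$ and the awkward ``infinite initial slope'' issue disappears), proves $D_{n-1}\le 1$ for the leader's neighbour, and then runs a \emph{backward induction} on $i$, feeding the bound $D_{i+1}\le 1$ into the equation for $D_i$ and closing with a regularised positive part and a barrier argument at the level $1$. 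Your maximum-principle-at-the-maximiser argument does survive scrutiny — at a maximising index with $z_i\ge 0$ one has $-\phi(R_{i+1})\ge 0$ and $z_{i+1}\le z_i$, hence $\phi(R_i)z_i-\phi(R_{i+1})z_{i+1}\le(\phi(R_i)-\phi(R_{i+1}))z_i\le 0$ by the monotonicity of $\phi$, giving $\dot z_i\le -z_i^2$ and then $z_i\le 1/t$ by comparison — but it forces you to handle the non-differentiability of $t\mapsto\max_i z_i(t)$ (Dini derivatives or a regularisation of the max), which the triangular structure renders unnecessary. Two smaller remarks: only the lower bound in {\bf{(V2)}} ($\phi$ non-increasing) is used in this proposition, the constant $K$ plays no role here; and your parenthetical re-expression $\dot R_i=-R_iz_i(x_{i+1}-x_i)/\ell_n$ is off by a factor (the correct identity is simply $\dot R_i=-R_iz_i$), though the preceding formula you wrote is right. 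What your Riccati comparison buys is robustness with respect to the initial slope; what the paper's $D_i=tz_i$ plus backward induction buys is a pointwise-in-$i$ statement with no maximum over the index and hence no issues of non-smoothness in time.
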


\begin{proof}
Let us denote, for all $i\in\{0,\ldots,n-1\}$,
\begin{equation}
\label{eq:Di}
D_i(t):= t \frac{v(R_{i+1}(t))-v(R_i(t))}{x_{i+1}(t)-x_i(t)} = t n R_i(t) \bigl(v(R_{i+1}(t))-v(R_i(t))\bigr),
\end{equation}
with the convention $R_n(t)=0$. We shall use the ODEs
\begin{align*}
    & \dot{R}_i(t) = -n R_i(t)^2 \bigl(v(R_{i+1})-v(R_i(t))\bigr),\qquad i\in\{0,\ldots,n-2\},\\
    & \dot{R}_{n-1}(t) = -n R_{n-1}(t)^2 \bigl(v_{\max}-v(R_{n-1}(t))\bigr).
\end{align*}
We compute (we omit the time dependence for simplicity)
\begin{align*}
    & \dot{D}_{n-1}= n R_{n-1}(v_{\max}-v(R_{n-1})) + t n \left[v_{\max}-v(R_{n-1}) - R_{n-1}v'(R_{n-1})\right]\dot{R}_{n-1}\\
    & \ = n R_{n-1}(v_{\max}-v(R_{n-1})) - t n^2 \left[v_{\max}-v(R_{n-1}) - R_{n-1}v'(R_{n-1})\right] R_{n-1}^2(v_{\max}-v(R_{n-1})).
\end{align*}
Due to $v'\leq 0$ in {\bf{(V1)}}, we get
\begin{align*}
    & \dot{D}_{n-1}\leq n R_{n-1}(v_{\max}-v(R_{n-1}))\left[ 1- D_{n-1}\right].
\end{align*}
Since $D_{n-1}(0)=0$, the above estimate implies $D_{n-1}(t)\leq 1$ for all $t\geq 0$, otherwise, given $t_1>0$ the first time $t$ such that $D_{n-1}(t)=1$ and $t_2>t_1$ such that $D_{n-1}(t)>1$ on $t\in (t_1,t_2)$ (recall that $D_{n-1}$ is continuous), one gets for $t\in (t_1,t_2)$
\begin{align*}
    & D_{n-1}(t)\leq D_{n-1}(t_1) + n\int_{t_1}^t  R_{n-1}(s)\bigl(v_{\max}-v(R_{n-1}(s))\bigr)\bigl[ 1- D_{n-1}(s)\bigr] ds \leq D_{n-1}(t_1) = 1,
\end{align*}
i. e. a contradiction. Therefore, \eqref{eq:oleinik2} is proven for $i=n-1$. 

The case $i\in \{0,\ldots,n-2\}$ is proven inductively. Assume $D_{k+1}(t)\leq 1$ for all $t\geq 0$. We need to prove $D_k(t)\leq 1$ for all $t\geq 0$. We compute
\begin{align}
    & \dot{D}_k = n R_k (v(R_{k+1})-v(R_k))\left[1-D_k\right] -n R_k R_{k+1} v'(R_{k+1})D_{k+1} + n R_k^2 v'(R_k) D_k\nonumber\\
    & \ \leq  n R_k (v(R_{k+1})-v(R_k))\left[1-D_k\right] - n R_k R_{k+1}  v'(R_{k+1})+ n R_k^2 v'(R_k) D_k,\label{eq:oleinik_estimate1}
\end{align}
where we have used $D_{k+1}\leq 1$. Now, for small $\delta>0$, consider a smooth approximation $\R\ni\sigma\mapsto \eta_\delta(\sigma)$ of the positive part function $\R\ni\sigma\mapsto (\sigma)_+ = \max\{0,\sigma\}$ such that $\eta_\delta(\sigma)\to (\sigma)_+$ uniformly on $\sigma\in \R$, $\eta_\delta(\sigma)=\eta'_\delta(\sigma)=0$ for all $\sigma\leq 0$, $\eta'_\delta(\sigma)\in (0,1]$ for all $\sigma>0$, and $\sigma \eta'_\delta(\sigma)\to (\sigma)_+$ uniformly on $\sigma\in \R$. We compute, recalling \eqref{eq:phi_def} and using \eqref{eq:oleinik_estimate1}, 
\begin{align*}
     \frac{d}{dt}\eta_\delta(D_k(t)) =\eta'_\delta(D_k) \dot{D}_k \ \leq & n\eta'_\delta(D_k) R_k (v(R_{k+1})-v(R_{k}))\left[1-D_k\right]
     \\
     &- n\eta'_\delta(D_k)  R_k \phi(R_{k+1}) + n\eta'_\delta(D_k)  R_k\phi(R_k) D_k.
\end{align*}
Now, since three terms in the above right-hand side are non-zero only if $D_k\geq 0$, which is equivalent to $v(R_{k+1})\geq v(R_k)$, condition ${\bf{(V2)}}$ implies
\[\phi(R_{k+1})\geq \phi(R_k),\]
and hence
\begin{align*}
    &  \frac{d}{dt}\eta_\delta(D_k(t)) \leq n \eta'_\delta(D_k) R_k \left[ v(R_{k+1})-v(R_k)-\phi(R_{k})\right] \left[1-D_k\right].
\end{align*}
Since $[v(R_{k+1})-v(R_k)-\phi(R_{k})]\geq 0$ and due to $0\leq \eta'_\delta\leq 1$, we get
\begin{align*}
    & \frac{d}{dt}\eta_\delta(D_k(t)) \leq n R_k \left[v(R_{k+1})-v(R_k)-\phi(R_{k})\right] \left[1-\eta'_\delta(D_k) D_k\right].
\end{align*}
We claim that the above inequality implies that $(D_k(t))_+\leq 1$ for all $t\geq 0$. Suppose by contradiction that $t_1>0$ is the first time $t$ such that $(D_k(t_1))_+=1$ and $t_2>t_1$ is such that $(D_k(t))_+>1$ on $t\in (t_1,t_2)$ (recall that $D_{k}$ is continuous). Then, one gets for $t\in (t_1,t_2)$ 
\begin{align*}
    & \eta_\delta(D_k(t)) \leq \eta_\delta(D_k(t_1)) +n\int_{t_1}^{t} R_k(s) \left[v(R_{k+1}(s))-v(R_k(s))-\phi(R_{k}(s))\right]\left[1-\eta'_\delta(D_k(s))D_k(s)\right] ds\,
\end{align*}
and, by letting $\delta\searrow 0$, we obtain
\begin{align*}
     (D_k(t))_+ &\leq (D_k(t_1))_+ + n\int_{t_1}^t R_k(s) \left[v(R_{k+1}(s))-v(R_k(s))-\phi(R_{k}(s))\right]\left[1-(D_k(s))_+\right]ds\, 
    \\
    &\leq (D_k(t_1))_+=1,
\end{align*}
which is a contradiction. Hence, $D_k(t)\leq (D_k(t))_+\leq 1$ for all times, which concludes the proof.
\end{proof}

\section{Improved one-sided Lipschitz estimate}\label{sec:improved}

We now provide the proof of Theorem \ref{prop:improved}, which contains an improved version of the discrete one-sided Lipschitz condition in the special case \eqref{eq:v_special}.

\begin{proof}[Proof of Theorem \ref{prop:improved}]
With the same notation introduced in \eqref{eq:Di}, in the present case we have
\begin{align*}
 D_i(t)&= t n R_i(t) \left[R_{i}(t)^\gamma-R_{i+1}(t)^\gamma\right], &\quad i\in\{0,\ldots,n-1\*,
\\
     \dot{R}_i(t) &= -n R_i(t)^2 \left[R_{i}(t)^\gamma-R_{i+1}(t)^\gamma\right] \ \text{ and } \ t\dot{R}_{i}(t) = -D_{i}(t) R_{i}(t), &\quad i\in\{0,\ldots,n-2\},
    \\
     \dot{R}_{n-1}(t) &= -n R_{n-1}(t)^{\gamma+2}.
\end{align*}
Therefore, a simple computation implies
\begin{align*}
    & \dot{D}_{n-1}= n R_{n-1}^{\gamma+1} + (\gamma+1)t n R_{n-1}^{\gamma} \dot{R}_{n-1}= n R_{n-1}^{\gamma+1}-(\gamma+1)t n^2 R_{n-1}^{2(\gamma+1)}= n R_{n-1}^{\gamma+1}[1- (\gamma+1)D_{n-1}].
\end{align*}
Since $D_{n-1}(0)=0$, the above ODE implies $D_{n-1}(t)\leq \frac{1}{\gamma+1}$ for all $t\geq 0$, otherwise, given $t_1>0$ the first time $t$ such that $D_{n-1}(t)=\frac{1}{\gamma+1}$ and $t_2>t_1$ such that $D_{n-1}(t)>\frac{1}{\gamma+1}$ on $t\in (t_1,t_2)$ (recall that $D_{n-1}$ is continuous), one gets for $t\in (t_1,t_2)$
\begin{align*}
    & D_{n-1}(t)=D_{n-1}(t_1) + n \int_{t_1}^t  R_{n-1}(s)^{\gamma+1}[1- (\gamma+1)D_{n-1}(s)] ds \leq D_{n-1}(t_1) = \frac{1}{\gamma+1},
\end{align*}
i. e. a contradiction. Therefore, \eqref{1} is proven for $i=n-1$.

The case $i\in \{0,\ldots,n-2\}$ is proven inductively. Assume $D_{k+1}(t)\leq \frac{1}{\gamma+1}$ for all $t\geq 0$. We need to prove $D_k(t)\leq \frac{1}{\gamma+1}$ for all $t\geq 0$. We compute
\begin{align}
    & \dot{D}_k = n R_k (R_{k}^\gamma-R_{k+1}^\gamma)\left[1-D_k\right]  + \gamma n R_k R_{k+1}^\gamma  D_{k+1}-\gamma n R_k^{\gamma+1} D_{k}\nonumber\\
    & \ \leq   n R_k (R_{k}^\gamma-R_{k+1}^\gamma)\left[1-D_k\right] + \frac{\gamma}{\gamma+1} n R_k R_{k+1}^\gamma  -\gamma n R_k^{\gamma+1} D_{k},\label{eq:oleinik_estimateimpr}
\end{align}
where we have used $D_{k+1}\leq \frac{1}{\gamma+1}$. Now, for small $\delta>0$, consider a smooth approximation $\R\ni\sigma\mapsto \eta_\delta(\sigma)$ of the positive part function $\R\ni\sigma\mapsto (\sigma)_+ = \max\{0,\sigma\}$ such that $\eta_\delta(\sigma)\to (\sigma)_+$ uniformly on $\sigma\in \R$, $\eta_\delta(\sigma)=\eta'_\delta(\sigma)=0$ for all $\sigma\leq 0$, $\eta'_\delta(\sigma)\in (0,1]$ for all $\sigma>0$, and $\sigma \eta'_\delta(\sigma)\to (\sigma)_+$ uniformly on $\sigma\in \R$. We compute, using \eqref{eq:oleinik_estimateimpr}, 
\begin{align*}
     \frac{d}{dt}\eta_\delta(D_k(t)) =&\eta'_\delta(D_k) \dot{D}_k \leq  n\eta'_\delta(D_k) R_k (R_{k}^\gamma-R_{k+1}^\gamma)\left[1-D_k\right]
   +\frac{\gamma}{\gamma + 1} n\eta'_\delta(D_k)  R_k R_{k+1}^\gamma - \gamma n\eta'_\delta(D_k)  R_k^{\gamma+1} D_k.
\end{align*}
Now, we add and subtract in the above right-hand side the term
\begin{equation*}
\gamma n \eta'_\delta(D_k) R_k R_{k+1}^{\gamma} D_k
\end{equation*}
and we notice that the three terms in the above right-hand side are non-zero only if $D_k\geq 0$, which is equivalent to $R_{k}\geq R_{k+1}$. Therefore, we have
\begin{align*}
     \frac{d}{dt}\eta_\delta(D_k(t)) \leq & n\eta'_\delta(D_k) R_k (R_{k}^\gamma-R_{k+1}^\gamma)\left[1-D_k\right] +\frac{\gamma}{\gamma + 1} n\eta'_\delta(D_k)  R_k R_{k+1}^\gamma \left[1-(\gamma+1)D_k\right]
    \\
    &- \gamma n \eta'_\delta(D_k)  R_k(R_k^{\gamma} -R_{k+1}^{\gamma})D_k
    \\
    = & n \eta'_\delta(D_k) R_k \Bigl(R_{k}^\gamma-\frac{1}{\gamma+1}R_{k+1}^\gamma\Bigr)\left[1-(\gamma+1)D_k\right].
\end{align*}
Since $R_{k}^\gamma\geq R_{k+1}^\gamma\geq \frac{1}{\gamma+1}R_{k+1}^\gamma$ and $0\leq \eta'_\delta\leq 1$, we get
\begin{align*}
    & \frac{d}{dt}\eta_\delta(D_k(t)) \leq n R_k \Bigl(R_{k}^\gamma-\frac{1}{\gamma+1}R_{k+1}^\gamma\Bigr)\left[1-(\gamma+1)\eta'_\delta(D_k)D_k\right].
\end{align*}
We claim that the above inequality implies that $(D_k(t))_+\leq \frac{1}{\gamma+1}$ for all $t\geq 0$. Suppose by contradiction that $t_1>0$ is the first time $t$ such that $(D_k(t_1))_+=\frac{1}{\gamma+1}$ and $t_2>t_1$ is such that $(D_k(t))_+>\frac{1}{\gamma+1}$ on $t\in (t_1,t_2)$ (recall that $D_{k}$ is continuous). Then, one gets for $t\in (t_1,t_2)$ 
\begin{align*}
    & \eta_\delta(D_k(t)) \leq \eta_\delta(D_k(t_1)) +n\int_{t_1}^{t} R_k(s) \Bigl(R_{k}(s)^\gamma-\frac{1}{\gamma+1}R_{k+1}(s)^\gamma\Bigr)\left[1-(\gamma+1)\eta'_\delta(D_k(s))D_k(s)\right] ds\,
\end{align*}
and, by letting $\delta\searrow 0$, we obtain
\begin{align*}
     (D_k(t))_+ &\leq (D_k(t_1))_+ + n\int_{t_1}^t R_k(s) \Bigl(R_{k}(s)^\gamma-\frac{1}{\gamma+1}R_{k+1}(s)^\gamma\Bigr)\left[1-(\gamma+1)(D_k(s))_+\right]ds\, 
    \\
    &\leq (D_k(t_1))_+=\frac{1}{\gamma+1},
\end{align*}
which is a contradiction. Hence, $D_k(t)\leq (D_k(t))_+\leq \frac{1}{\gamma+1}$ for all times, which completes the proof of \eqref{1}. 

Finally, the equivalence between \eqref{1} and \eqref{1bis} is a consequence of the identity $f'(\rho)=v(\rho)+\rho v'(\rho)=v_{\max}-(\gamma+1)\rho^\gamma$, where the last equality holds by the choice of $v$.
\end{proof}

\section{Conclusion of the convergence proofs}\label{sec:conclusion}

In this section we conclude the proofs of Theorems \ref{thm1} and \ref{thm2}.
In order to make the paper self-contained, we also provide here the proof of the $1$-Wasserstein equi-continuity in time of $\rho^n$ contained in \cite[Proposition 3.4]{DiFrancescoRosini}. 

\begin{prop}\label{prop:wasserstein}
There exists a constant $C\geq 0$ independent of $n$ and of $t$ such that, for all $s,t\geq 0$,
\begin{equation}
    \label{eq:uniform_in_time}
    W_1(\rho^n(\cdot,t),\rho^n(\cdot,s))\leq C|t-s|.
\end{equation}
\end{prop}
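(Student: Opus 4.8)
The plan is to exploit the one-dimensional structure of the $1$-Wasserstein distance. Since the initial mass is normalised to $1$, each $\rho^n(\cdot,t)$ is a probability measure on $\R$ with compact support (by \eqref{eq:support}), so I would first recall that in one space dimension
\[
W_1(\rho^n(\cdot,t),\rho^n(\cdot,s))=\int_0^1\bigl|X^n(m,t)-X^n(m,s)\bigr|\,dm,
\]
where $X^n(\cdot,t)\colon[0,1]\to\R$ is the pseudo-inverse (monotone rearrangement) of the cumulative distribution function $F^n(\cdot,t)=\int_{-\infty}^{\cdot}\rho^n(y,t)\,dy$. This is the standard representation of $W_1$ via optimal monotone coupling, and it is the natural tool here because the particle scheme is precisely a Lagrangian description of the mass.

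Next I would compute $X^n$ explicitly. Because each interval $[x_i(t),x_{i+1}(t))$ carries exactly mass $\ell_n=1/n$, we have $F^n(x_i(t),t)=i/n$, and on $[x_i(t),x_{i+1}(t))$ the density is the constant $R_i(t)$, so $F^n$ is piecewise affine and its pseudo-inverse $X^n(\cdot,t)$ is the continuous, piecewise-linear interpolant of the particle positions, satisfying $X^n(i/n,t)=x_i(t)$. The difference $g(m):=X^n(m,t)-X^n(m,s)$ is therefore continuous and piecewise-linear with nodal values $g(i/n)=x_i(t)-x_i(s)$. By convexity of $|\cdot|$ on each panel $[i/n,(i+1)/n]$,
\[
\int_0^1|g(m)|\,dm\leq\frac1n\sum_{i=0}^{n-1}\frac{|g(i/n)|+|g((i+1)/n)|}{2}\leq\max_{0\leq i\leq n}\bigl|x_i(t)-x_i(s)\bigr|,
\]
which reduces the whole estimate to a uniform control of individual particle displacements.

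Finally I would bound the displacement of each particle directly from the ODE system \eqref{9}. For every $i$ one has $|x_i(t)-x_i(s)|\leq\int_{s}^{t}|\dot{x}_i(\tau)|\,d\tau$, and $\dot{x}_i=v(R_i)$ (or $v_{\max}$ for $i=n$). The crucial point is that this speed is bounded uniformly in $n$ and $t$: by the discrete maximum principle (Proposition \ref{prop:MP}, in the form \eqref{eq:maximum}) we have $R_i(\tau)\in[0,\overline{R}]$ for all $i,\tau,n$, and since $v$ is $C^1$ hence bounded on $[0,\overline{R}]$ by assumption {\bf{(V1)}}, we may take $C:=\max_{[0,\overline{R}]}|v|$ to obtain $|\dot{x}_i(\tau)|\leq C$. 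Combining the three steps yields $W_1(\rho^n(\cdot,t),\rho^n(\cdot,s))\leq C|t-s|$ with $C$ independent of $n$, as claimed. The argument is essentially a transport-distance computation plus a speed bound, so no genuine obstacle arises; the only delicate ingredient is that the velocity bound be \emph{uniform in $n$ and $t$}, which is exactly what the maximum principle supplies.
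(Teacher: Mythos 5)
Your proposal is correct and follows essentially the same route as the paper: both represent $W_1$ via the pseudo-inverse $X^n$ of the cumulative distribution (the piecewise-linear interpolant of the particle positions) and reduce the estimate to a uniform speed bound $|\dot{x}_i|=|v(R_i)|\leq\max_{[0,\overline{R}]}|v|$ supplied by the discrete maximum principle \eqref{eq:maximum}. The only difference is in the intermediate step: the paper splits $X^n(z,t)-X^n(z,s)$ into the nodal term $x_i(t)-x_i(s)$ plus the slope term $(z-i/n)\bigl(R_i(t)^{-1}-R_i(s)^{-1}\bigr)$ and controls the latter by integrating $\frac{d}{d\tau}R_i(\tau)^{-1}=n\bigl(v(R_{i+1})-v(R_i)\bigr)$, ending with the constant $2\max\{v_{\max},|v(\overline{R})|\}$; your convexity/trapezoid bound on each panel bypasses that extra computation entirely and yields the slightly sharper constant $\max_{[0,\overline{R}]}|v|$. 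Both arguments are valid; yours is marginally cleaner.
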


\begin{proof}
It is well known (see e.g. \cite{villani_optimal}) that 
\[ W_1(\rho^n(\cdot,t),\rho^n(\cdot,s)) = \|X^n(\cdot,t)-X^n(\cdot,s)\|_{L^1(\R)},\]
where $X^n:[0,1]\times [0,+\infty)\to \R$ is the unique measurable function such that $X^n(\cdot,t):[0,1]\to \R$ is the inverse of $F^n(\cdot,t)$ restricted to $[x_0(t),x_n(t)]$. A simple computation shows 
\[X^n(z,t)=\sum_{i=0}^{n-1}\left[x_i(t)+(z-i/n)R_i(t)^{-1}\right]\mathbf{1}_{[i/n,(i+1)/n)}(z) + x_n(t)\mathbf{1}_{\{1\}}(z).\]
Hence,  
\begin{align*}
    & W_1(\rho^n(\cdot,t),\rho^n(\cdot,s)) \leq  \frac{1}{n}\sum_{i=0}^{n-1}|x_i(t)-x_i(s)| +\sum_{i=0}^{n-1}\left|R_i(s)^{-1}-R_i(t)^{-1}\right|\int_{i/n}^{(i+1)/n}(z-i/n) dz\\
    & \ \leq \frac{1}{n}\sum_{i=0}^{n-1}\int_s^t |v(R_i(\tau))|d\tau +\frac{1}{2 n^2}\sum_{i=0}^{n-1} \int_s^t \left|\frac{d}{d\tau}R_i(t)^{-1}\right|d\tau\\
    & \ \leq \max\{v_{\max}, |v(\bar{R})|\}|t-s| +\frac{1}{2 n}  \sum_{i=0}^{n-1}\int_s^t\left|v(R_i(\tau)-v(R_{i+1}(\tau))\right|d\tau \leq 2\max\{v_{\max}, |v(\overline{R})|\} |t-s|,
\end{align*}
which completes the proof.
\end{proof}

\begin{prop}[Strong compactness of $\rho^n$]\label{prop:compactness}
The sequence $\rho^n$ has a subsequence that converges almost everywhere on $\R\times (0,+\infty)$ and in $L^1_{loc}(\R\times (0,+\infty))$.
\end{prop}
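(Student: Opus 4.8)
The plan is to obtain strong compactness of $\rho^n$ by proving two ingredients: a uniform-in-$n$ spatial regularity estimate for $v(\rho^n(\cdot,t))$ at each positive time, and the temporal equicontinuity in the $1$-Wasserstein distance already established in Proposition \ref{prop:wasserstein}. The crucial observation is that the one-sided Lipschitz estimate of Proposition \ref{prop:oleinik1}, namely $t\,(v(R_{i+1})-v(R_i))/(x_{i+1}-x_i)\leq 1$, controls only the positive jumps of $v(\rho^n)$. Combined with the fact that $v(\rho^n(\cdot,t))$ is uniformly bounded (by the maximum principle \eqref{eq:maximum} and continuity of $v$) and supported on a bounded set (by \eqref{eq:support}), this one-sided bound upgrades to a full, uniform-in-$n$ bound on the total variation of $v(\rho^n(\cdot,t))$ on any compact time interval $[\tau,T]$ with $\tau>0$. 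I would carry out this argument first.

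First I would fix $t>0$ and study $g^n(x,t):=v(\rho^n(x,t))$, a piecewise constant function whose jumps occur at the particle positions $x_i(t)$. The positive variation of $g^n(\cdot,t)$ is $\sum_i (v(R_{i+1})-v(R_i))_+$. By Proposition \ref{prop:oleinik1} each positive increment satisfies $v(R_{i+1})-v(R_i)\leq (x_{i+1}-x_i)/t$, so summing over those indices where the increment is positive and using that the $x_i(t)$ remain inside the interval $[\overline{x}_{\min},\overline{x}_{\max}+tv_{\max}]$ of bounded length (from \eqref{eq:support}), the total positive variation is bounded by $(\overline{x}_{\max}-\overline{x}_{\min}+tv_{\max})/t$, uniformly in $n$. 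Since $g^n(\cdot,t)$ has compact support and is uniformly bounded, the total negative variation differs from the positive variation by a bounded amount, so the full total variation is controlled uniformly in $n$ for $t$ in any compact subset of $(0,+\infty)$. Because $v$ is a $C^1$ diffeomorphism on $[0,\overline{R}]$ by \textbf{(V1)}, this $BV$ bound on $g^n$ transfers to a $BV$ bound on $\rho^n(\cdot,t)$ itself.

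Next I would combine this spatial $BV$ bound with the temporal estimate \eqref{eq:uniform_in_time}. The standard route is a Helly/Aubin--Lions-type compactness argument: for each fixed rational $t>0$ the sequence $\rho^n(\cdot,t)$ is bounded in $BV$ and in $L^\infty$ with uniformly bounded support, hence precompact in $L^1(\R)$; a diagonal extraction over a countable dense set of times yields a subsequence converging in $L^1$ for each such time. The $W_1$-equicontinuity in time from Proposition \ref{prop:wasserstein}, together with the uniform spatial bounds that allow one to compare $W_1$ convergence with $L^1$ convergence on functions of uniformly bounded variation and support, then propagates the convergence to all positive times and promotes it to convergence in $L^1_{loc}(\R\times(0,+\infty))$. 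Passing to a further subsequence gives almost everywhere convergence on $\R\times(0,+\infty)$.

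The main obstacle I anticipate is the degeneration of the spatial $BV$ estimate as $t\searrow 0$: the bound behaves like $C/t$, so it provides no uniform control near the initial time and the argument only yields compactness on $\R\times(0,+\infty)$ rather than up to $t=0$. This is exactly why the statement is phrased on the open half-space, and it reflects the smoothing interpretation of the one-sided Lipschitz condition discussed in the introduction. A secondary technical point is making rigorous the interplay between $W_1$-equicontinuity and $L^1$-convergence—one must use that on a set of functions with uniformly bounded support, uniform $L^\infty$ bound, and uniform $BV$ bound, $W_1$ and $L^1$ induce comparable notions of convergence—so that the single-time $L^1$ precompactness and the $W_1$ time-continuity can be merged into joint $L^1_{loc}$ convergence in space-time.
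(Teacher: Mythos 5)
Your overall architecture coincides with the paper's: the one-sided estimate of Proposition \ref{prop:oleinik1}, together with the uniform $L^\infty$ bound \eqref{eq:maximum} and the bounded support \eqref{eq:support}, upgrades to a uniform-in-$n$ total variation bound for $v(\rho^n(\cdot,t))$ on compact subsets of $\R\times(0,+\infty)$ (the positive-to-total variation step via the telescoping/boundedness argument is fine), and this is then combined with the $W_1$ equicontinuity of Proposition \ref{prop:wasserstein} and a diagonal extraction in $\delta$, $T$, $M$. The paper delegates the combination step to a generalized Aubin--Lions lemma from \cite{rossi_savare} (as used in \cite[Appendix A]{DFFR}), whereas you propose a hands-on Helly-plus-equicontinuity argument.

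There is, however, a genuine gap in your transfer of the $BV$ bound from $v(\rho^n)$ to $\rho^n$. Assumption \textbf{(V1)} only gives $v\in C^1([0,\overline{R}])$ strictly decreasing; it does not give $v'$ bounded away from zero, so $v$ need not be a $C^1$ diffeomorphism and $v^{-1}$ need not be Lipschitz. The paper's own model case $v(\rho)=A-\rho^\gamma$ with $\gamma>1$ has $v'(0)=0$: a profile oscillating $N$ times between $0$ and $\varepsilon=N^{-1/\gamma}$ has $\mathrm{TV}[v(\rho)]=N\varepsilon^\gamma=1$ but $\mathrm{TV}[\rho]=N^{1-1/\gamma}\to+\infty$, so a uniform $BV$ bound on $v(\rho^n)$ does \emph{not} imply one on $\rho^n$. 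Since Proposition \ref{prop:compactness} is stated without assuming $\overline{\rho}\in BV$ (the paper stresses this immediately afterwards), you cannot get the $BV$ bound on $\rho^n$ from elsewhere either. This undermines your Helly argument at fixed times and, more importantly, your mechanism for merging spatial precompactness with the $W_1$ time-equicontinuity, both of which you run on $\rho^n$ and both of which use $\mathrm{TV}[\rho^n(\cdot,t)]$. The repair is to keep the entire compactness argument at the level of $v(\rho^n)$ and only at the very end use that $v^{-1}$ is \emph{continuous} on the compact range, so that almost everywhere convergence of $v(\rho^n)$ yields almost everywhere convergence of $\rho^n=v^{-1}(v(\rho^n))$, and then dominated convergence gives $L^1_{loc}$. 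But then the $W_1$ equicontinuity, which concerns $\rho^n$ and not $v(\rho^n)$, no longer interfaces directly with the spatial $BV$ bound on $v(\rho^n)$; reconciling a compactness functional acting through the nonlinearity $v$ with equicontinuity in a weak metric on $\rho^n$ itself is precisely what the cited Aubin--Lions-type lemma of \cite{rossi_savare} is designed for, and it is doing real work in the paper's proof that your sketch elides.
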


\begin{proof}
The estimate \eqref{eq:oleinik2} implies that $v(\rho^n(\cdot,t))$ satisfies the one sided estimate
\[\bigl(v(\rho^n(x_{i+1}(t),t)-v(\rho^n(x_i(t),t))\bigr)_+\leq \frac{1}{\delta}(x_{i+1}(t)-x_i(t))\qquad \hbox{on $(x,t)\in \R\times [\delta,+\infty)$}\]
for all $\delta>0$. Moreover, \eqref{eq:maximum} implies that $v(\rho^n)$ is uniformly bounded. Hence, $v(\rho^n)$ has a uniformly (in $n$) bounded total variation on compact subsets of $\R\times (0,+\infty)$, which implies that $v(\rho^n)$ is  strongly compact in $L^1([-M,M]\times [\delta,T])$ for all $M\geq 0 $, $\delta>0$, and $T>\delta$. This follows as a consequence of \eqref{eq:uniform_in_time} and an Aubin-Lions type lemma contained in \cite{rossi_savare}, see \cite[Appendix A]{DFFR}. Since $v$ is strictly monotone, then it is invertible and $\rho^n$ is strongly compact on the same set. Choosing $M=n$, $\delta=1/n$, $T=n$, we can apply a diagonal procedure and obtain a subsequence of $\rho^n$ with the desired properties.  
\end{proof}

We observe that up to this point we never required $\overline{\rho}$ to be in $BV$. We now need this condition in order to obtain uniform $L^1$ continuity in time near $t=0$, a property that allows to prove the uniqueness of extended entropy solutions.

First of all, we recall that the follow-the-leader scheme \eqref{9} preserves the initial upper bound for the total variation.

\begin{lem}\label{lem:Bv}
Assume further that  $\overline{\rho}\in BV(\R)$. Then, 
\begin{equation}
    \label{eq:TV}
    \mathrm{TV}[\rho^n(\cdot,t)]\leq \mathrm{TV}[\overline{\rho}^n]\leq \mathrm{TV}[\overline{\rho}],\qquad \hbox{for all $t\geq 0$}.
\end{equation}
\end{lem}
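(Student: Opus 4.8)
The plan is to write the total variation of the piecewise-constant density \eqref{12} explicitly in terms of the densities $R_i(t)$ and then to show that it does not increase along the flow \eqref{9}. Adopting the convention $R_{-1}(t)=R_n(t)=0$ (consistent with $\rho^n(\cdot,t)$ vanishing outside $[x_0(t),x_n(t)]$), the jumps of $\rho^n(\cdot,t)$ sit exactly at the points $x_i(t)$ and have size $|R_i(t)-R_{i-1}(t)|$, so that
\begin{equation*}
\TV[\rho^n(\cdot,t)]=\sum_{i=0}^{n}\bigl|R_i(t)-R_{i-1}(t)\bigr|.
\end{equation*}
The second inequality in \eqref{eq:TV} is the easy one: by construction each value $R_i(0)=\ell_n/(\bar{x}_{i+1}-\bar{x}_i)$ equals the average of $\overline{\rho}$ over $[\bar{x}_i,\bar{x}_{i+1}]$, hence $\overline{\rho}^n$ is the $L^1$-projection of $\overline{\rho}$ onto the functions that are constant on each $[\bar{x}_i,\bar{x}_{i+1})$. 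Since such an averaging operator is well known to be total-variation diminishing (each average lies between the local infimum and supremum of $\overline{\rho}$), we get $\TV[\overline{\rho}^n]\leq \TV[\overline{\rho}]$.

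For the first (and main) inequality I would show that $t\mapsto \TV[\rho^n(\cdot,t)]$ is non-increasing. Writing $\sigma_i=\sign(R_i-R_{i-1})$ and differentiating formally, the boundary terms drop because $\dot{R}_{-1}\equiv\dot{R}_n\equiv 0$, and a discrete summation by parts (telescoping) gives
\begin{equation*}
\frac{d}{dt}\TV[\rho^n(\cdot,t)]=\sum_{i=0}^{n}\sigma_i\bigl(\dot{R}_i-\dot{R}_{i-1}\bigr)=\sum_{i=0}^{n-1}\bigl(\sigma_i-\sigma_{i+1}\bigr)\dot{R}_i.
\end{equation*}
Now I would plug in the ODEs $\dot{R}_i=-nR_i^2\bigl(v(R_{i+1})-v(R_i)\bigr)$ already used in Proposition \ref{prop:oleinik1} (with the convention $R_n=0$, so that $v(R_n)=v_{\max}$ recovers the equation for $\dot{R}_{n-1}$): since $v$ is strictly decreasing by {\bf{(V1)}}, one has $\sign(\dot{R}_i)=\sign(R_{i+1}-R_i)=\sigma_{i+1}$, hence $\dot{R}_i=\sigma_{i+1}|\dot{R}_i|$ and
\begin{equation*}
\bigl(\sigma_i-\sigma_{i+1}\bigr)\dot{R}_i=\bigl(\sigma_i\sigma_{i+1}-1\bigr)|\dot{R}_i|\leq 0,
\end{equation*}
because $\sigma_i\sigma_{i+1}\leq 1$. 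Summing over $i$ yields $\frac{d}{dt}\TV\leq 0$, which is the desired monotonicity.

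The main obstacle is that $\TV[\rho^n(\cdot,t)]$ is only Lipschitz, not $C^1$, in time: the computation above breaks down precisely at the times where some consecutive densities coincide ($R_{i+1}=R_i$), i.e. where the sign $\sigma_{i+1}$ jumps. To make the argument rigorous I would mimic the regularisation device already employed in Propositions \ref{prop:MP} and \ref{prop:oleinik1}: replace $|\cdot|$ by a smooth even convex approximation $\beta_\delta$ with $\beta_\delta\to|\cdot|$ uniformly, $\beta_\delta'$ odd, nondecreasing and bounded by $1$ in modulus, set $\mathcal{T}_\delta(t)=\sum_{i=0}^{n}\beta_\delta(R_i-R_{i-1})$, and repeat the telescoping with $p_i:=\beta_\delta'(R_i-R_{i-1})$ in place of $\sigma_i$. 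Using $\sign(\dot{R}_i)=\sign(p_{i+1})$ together with $|p_i|\leq 1$ one obtains the pointwise bound $\dot{\mathcal{T}}_\delta\leq \sum_{i}\bigl(1-|p_{i+1}|\bigr)|\dot{R}_i|$. Since the right-hand side is dominated by the integrable quantity $\sum_i|\dot{R}_i|$ (bounded thanks to \eqref{eq:maximum} and the continuity of $v$) and tends to $0$ as $\delta\searrow 0$ for every $t$ (whenever $R_{i+1}=R_i$ one has $\dot{R}_i=0$, and otherwise $|p_{i+1}|\to 1$), a reverse Fatou argument gives $\TV[\rho^n(\cdot,t)]-\TV[\rho^n(\cdot,s)]=\lim_{\delta\to 0}\bigl(\mathcal{T}_\delta(t)-\mathcal{T}_\delta(s)\bigr)\leq 0$ for $t\geq s$, completing the proof of \eqref{eq:TV}.
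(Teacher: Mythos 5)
Your proof is correct and follows essentially the same route as the paper's: both regularise the absolute value, differentiate the (regularised) total variation in time, sum by parts so that each summand is multiplied by $\dot R_i$, use the monotonicity of $v$ to conclude non-positivity, and justify $\TV[\overline{\rho}^n]\leq\TV[\overline{\rho}]$ by the fact that local averaging over the partition is total-variation diminishing. The one genuine streamlining in your write-up is the identity $\sign(\dot R_i)=\sign(R_{i+1}(t)-R_i(t))$, which, combined with the convention $R_{-1}=R_n=0$, handles the boundary terms and the degenerate set $\{R_{i+1}=R_i\}$ (where $\dot R_i=0$) in one stroke, whereas the paper treats the endpoint terms separately and runs a three-case sign analysis producing $n\,o_\delta(1)$ error terms that must then be sent to zero.
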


\begin{proof}
Let $\eta_\delta:\R\to [0,+\infty)$ be a smooth approximation of the absolute value function, such that $\eta_\delta(\rho)\to |\rho|$ uniformly on $\R$, $\eta_\delta(\rho)$ is even, $\eta'_\delta(\rho)\in [0,1]$ for $\rho\geq 0$ and $\eta'_\delta(\rho)=1$ for $\rho\geq \delta$, $\eta''_\delta(\rho)\geq 0$ for $\rho\in \R$ and $\eta''_\delta(\rho) = 0$ for all $\rho \not\in [-\delta,\delta]$.
We define
\[\mathrm{TV}_\delta[\rho^n(t)]:=R_0(t)+R_{n-1}(t) + \sum_{k=0}^{n-2}\eta_\delta(R_{k+1}(t)-R_k(t)).\]
We compute
\begin{align}
    & \frac{d}{dt} \mathrm{TV}_\delta[\rho^n(t)] = \dot{R}_0(t) + \dot{R}_{n-1}(t) + \sum_{k=0}^{n-2}\eta'_\delta(R_{k+1}(t)-R_k(t))(\dot{R}_{k+1}(t)-\dot{R}_k(t))\nonumber\\
    & \ = \dot{R}_0(t)\bigl(1- \eta'_\delta(R_1(t)-R_0(t))\bigr)+ \dot{R}_{n-1}(t)\bigl(1+ \eta'_\delta(R_{n-1}(t)-R_{n-2}(t))\bigr)\nonumber\\
    & \ + \sum_{k=1}^{n-2}\dot{R}_k(t)\bigl(\eta'_\delta(R_k(t)-R_{k-1}(t))-\eta'_\delta(R_{k+1}(t)-R_k(t))\bigr).\label{eq:TVestimate}
\end{align}
We observe
\[\dot{R}_{n-1}(t)=-n R_{n-1}^2(t) \bigl(v_{\max}-v(R_{n-2}(t))\bigr)\leq 0,\]
which implies 
\[\dot{R}_{n-1}(t)\bigl(1+ \eta'_\delta(R_{n-1}(t)-R_{n-2}(t))\bigr)\leq 0.\]
Moreover, we have
\[\dot{R}_0(t)=-nR_0^2(t)\bigl(v(R_1(t))-v(R_0(t))\bigr).\]
Hence, since $v$ is strictly decreasing, the above term is non-positive for $R_1(t)\leq R_0(t)$. On the other hand, if $R_1(t)-R_0(t)\geq \delta$ then $\eta'_\delta(R_1(t)-R_0(t))=1$. Therefore, by considering only the range $R_1(t)-R_0(t)\in [0,\delta)$, the continuity of $v$ implies
\[\dot{R}_0(t)\bigl(1- \eta'_\delta(R_1(t)-R_0(t))\bigr)\leq n\,\, o_\delta(1),\]
where we have also used that $R_0(t)$ is uniformly bounded in time due to the discrete maximum principle.
We now consider the generic term
\begin{align*}
    &\dot{R}_k(t)\bigl(\eta'_\delta(R_k(t)-R_{k-1}(t))-\eta'_\delta(R_{k+1}(t)-R_k(t))\bigr)\\
    & \ = -n R_k(t)^2\bigl(v(R_{k+1}(t))-v(R_k(t))\bigr)\bigl(\eta'_\delta(R_k(t)-R_{k-1}(t))-\eta'_\delta(R_{k+1}(t)-R_k(t))\bigr)
\end{align*}
for all $k\in\{1,\ldots,n-2\}$. In case $R_{k+1}(t)-R_k(t)\leq -\delta$, the above term is $\leq 0$ since $\eta'_\delta(R_{k+1}(t)-R_k(t))=-1$ in that case and $v(R_{k+1}(t))\geq v(R_k(t))$ because $v$ is decreasing. If $R_{k+1}(t)-R_k(t)\geq \delta$, then $\eta'_\delta(R_{k+1}(t)-R_k(t))=1$ and $v(R_{k+1}(t))-v(R_k(t))\leq 0$, which implies the whole term is non-positive. In case $R_{k+1}(t)-R_k(y)\in(-\delta,\delta)$, then the continuity of $v$ once again implies the whole term is controlled by $n\,\, o_\delta(1)$ due to the continuity of $v$. Therefore, we integrate \eqref{eq:TVestimate} and let $\delta\searrow 0$ to obtain 
\[\mathrm{TV}[\rho^n(\cdot,t)] \leq \mathrm{TV}[\overline{\rho}^n].\]
Finally, since $\overline{\rho}\in BV$, then
\[\mathrm{TV}[\overline{\rho}^n] \leq \mathrm{TV}[\overline{\rho}],\]
because every $BV$ function has a right-continuous almost everywhere representation, and by the application of the mean value formula.
\end{proof}

\begin{prop}
\label{prop:nearzero}
Assume $\overline{\rho}$ satisfies ${\mathbf{(I)}}$ and $\overline{\rho}\in BV(\R)$.
There exists a constant $C\geq 0$ independent of $n$ and such that
\[\|\rho^n(\cdot,t)-\overline{\rho}^n\|_{L^1(\R)}\leq Ct^{1/2}.\]
\end{prop}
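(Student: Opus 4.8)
The plan is to obtain the $t^{1/2}$ rate by interpolating the already-established $W_1$-equicontinuity in time against the uniform $BV$ bound. Two ingredients are in hand. First, Proposition \ref{prop:wasserstein} applied with $s=0$ gives, since $\rho^n(\cdot,0)=\overline{\rho}^n$, the linear-in-time estimate $W_1(\rho^n(\cdot,t),\overline{\rho}^n)\leq Ct$. Second, Lemma \ref{lem:Bv} yields both $\mathrm{TV}[\rho^n(\cdot,t)]\leq \mathrm{TV}[\overline{\rho}]$ and $\mathrm{TV}[\overline{\rho}^n]\leq \mathrm{TV}[\overline{\rho}]$, hence a bound $\mathrm{TV}[\rho^n(\cdot,t)-\overline{\rho}^n]\leq 2\,\mathrm{TV}[\overline{\rho}]$ that is uniform in $n$ and $t$. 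The $L^1$ distance between two densities of equal mass is then controlled by the geometric mean of their $W_1$ distance and the total variation of their difference, and combining this with the two bounds above produces the claim.

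The heart of the argument is the elementary interpolation inequality
\[\|h\|_{L^1(\R)}\leq 2\sqrt{2}\left(\int_\R|H|\,dx\right)^{1/2}\mathrm{TV}[h]^{1/2},\qquad H(x):=\int_{-\infty}^x h,\]
valid for any compactly supported $h\in BV(\R)$ with $\int_\R h=0$; the zero-average condition makes $H$ vanish at $\pm\infty$, and when $h$ is the difference of two densities of equal mass one has $\int_\R|H|\,dx=W_1$. To prove it I would fix a scale $r>0$ and write, for a.e.\ $x$,
\[h(x)=\frac{1}{r}\int_x^{x+r}h(y)\,dy-\frac{1}{r}\int_x^{x+r}\!\!\int_{[x,y]}dh'(s)\,dy,\]
where $dh'$ denotes the signed distributional derivative of $h$. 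The first term equals $r^{-1}\bigl(H(x+r)-H(x)\bigr)$, while the double integral is bounded in absolute value by $\int_{[x,x+r]}|dh'|$. Integrating in $x$ and using Fubini gives
\[\|h\|_{L^1(\R)}\leq \frac{2}{r}\int_\R|H|\,dx+r\,\mathrm{TV}[h],\]
and optimising over $r$, that is choosing $r=\sqrt{2\int_\R|H|\,dx/\mathrm{TV}[h]}$, yields the stated inequality.

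Applying this with $h=\rho^n(\cdot,t)-\overline{\rho}^n$, for which $\int_\R|H|\,dx=W_1(\rho^n(\cdot,t),\overline{\rho}^n)$, and inserting the two bounds recalled above gives
\[\|\rho^n(\cdot,t)-\overline{\rho}^n\|_{L^1(\R)}\leq 2\sqrt{2}\,(Ct)^{1/2}\bigl(2\,\mathrm{TV}[\overline{\rho}]\bigr)^{1/2}=:C't^{1/2},\]
with $C'$ independent of $n$ and $t$, as required. Since both the $W_1$-equicontinuity and the $BV$ bound are already established, there is no genuine obstacle beyond the interpolation step; the only points requiring care are that the equal-mass (zero-average) property of $h$ is precisely what makes $H$ integrable and equal to the $W_1$ cost, and that the total-variation bound must be applied to the difference $\rho^n(\cdot,t)-\overline{\rho}^n$, controlled via the triangle inequality by $\mathrm{TV}[\rho^n(\cdot,t)]+\mathrm{TV}[\overline{\rho}^n]\leq 2\,\mathrm{TV}[\overline{\rho}]$, rather than to $\rho^n(\cdot,t)$ alone.
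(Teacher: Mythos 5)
Your argument is correct and follows the same route as the paper: both proofs control $\|\rho^n(\cdot,t)-\overline{\rho}^n\|_{L^1}$ by interpolating between the total variation of the difference (uniformly bounded by Lemma \ref{lem:Bv}) and the $L^1$ norm of its primitive, which equals the $1$-Wasserstein distance and is $O(t)$ by Proposition \ref{prop:wasserstein} with $s=0$. The only difference is in how the interpolation inequality $\|h\|_{L^1}\lesssim \mathrm{TV}[h]^{1/2}\bigl(\int|H|\bigr)^{1/2}$ is justified: the paper invokes a Gagliardo--Nirenberg inequality for $W^{1,1}$ together with approximation of $BV$ functions, whereas you prove it from scratch by averaging at a scale $r$, bounding $\|h\|_{L^1}\leq \tfrac{2}{r}\int|H|+r\,\mathrm{TV}[h]$, and optimising in $r$. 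Your version is self-contained and even gives an explicit constant $2\sqrt{2}$; the paper's is shorter but leans on an external reference. All the hypotheses you need (equal mass so that $H$ vanishes at $\pm\infty$ and $\int|H|=W_1$, and the triangle inequality for $\mathrm{TV}$ applied to the difference) are correctly checked.
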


\begin{proof}
For a generic function $g\in BV(\R)$, we have the following inequality
\begin{equation}
    \label{eq:GN}
    \|g\|_{L^1(\R)}\leq C \mathrm{TV}[g]^{1/2} \|G\|_{L^1(\R)}^{1/2},\qquad \hbox{with}\,\,\, G(x):=\int_{-\infty}^x g(y,t) dy.
\end{equation}
Inequality \eqref{eq:GN} is a consequence of a special case of Gagliardo-Nirenberg inequality applied to a function $g\in W^{1,1}(\R)$ and then using the approximation of $BV$ functions, see \cite[Theorem 3.9, Proposition 3.7]{AFP}.
The assertion follows by applying \eqref{eq:GN} to $g=\rho^n(t)-\overline{\rho}^n$, observing that, by denoting \[G(x):=\int_{-\infty}^x \bigl(\rho^n(y)-\overline{\rho}^n(y)\bigr)dy,\]
we have
\[\|G\|_{L^1(\R)} = W_1(\rho^n(t),\overline{\rho}^n),\]
and then recalling \eqref{eq:uniform_in_time} and \eqref{eq:TV}.
\end{proof}

\bigskip
We now turn our attention to the up-to-subsequence limit $\rho$ (almost everywhere and locally in $L^1(\R\times (0,+\infty))$) obtained by strong compactness in Proposition \ref{prop:compactness}.

As a first property, $\rho$ is a weak solution to \eqref{eq:main_intro} on $\R\times (0,+\infty)$.

\begin{prop}\label{prop:weak}
For all $\varphi\in C^1_c(\R\times (0,+\infty))$, we have
\begin{equation}
    \label{eq:weak_formulation}
    \int_0^{+\infty}\int_\R \left[\rho(x,t) \varphi_t(x,t) + f(\rho(x,t))\varphi_x(x,t)\right] dx dt = 0. 
\end{equation}
\end{prop}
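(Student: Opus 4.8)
The plan is to pass to the limit in the weak formulation satisfied by the discrete density $\rho^n$. First I would write down the weak formulation for $\rho^n$ itself: for a test function $\varphi\in C^1_c(\R\times(0,+\infty))$, I want to show that
\[
\int_0^{+\infty}\int_\R\left[\rho^n\varphi_t+f(\rho^n)\varphi_x\right]dx\,dt
\]
is small, ideally exactly zero or vanishing as $n\to+\infty$. The natural way to obtain this is to differentiate in time the quantity $\sum_i \ell_n \varphi(x_i(t),t)$, which is a Lagrangian (particle-based) pairing of $\rho^n$ against $\varphi$. Using the ODEs $\dot x_i=v(R_i)$ from \eqref{9} and the fact that $\varphi$ has compact support in time, integrating over $t$ gives a clean identity relating the time-derivative term to a spatial-flux term, up to a discretisation error coming from replacing the pointwise flux $f(\rho^n)$ on each cell by the particle speeds $v(R_i)$.

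The key computation is to show that the Lagrangian pairing $\frac{d}{dt}\sum_i \ell_n\varphi(x_i(t),t)=\sum_i\ell_n\bigl(\varphi_t(x_i,t)+v(R_i)\varphi_x(x_i,t)\bigr)$, and that this Riemann-sum-type expression approximates $\int_\R\bigl(\rho^n\varphi_t+f(\rho^n)\varphi_x\bigr)dx$ with an error that is controlled uniformly by the mesh size. Here I would exploit that $\rho^n$ is piecewise constant with value $R_i$ on $[x_i,x_{i+1})$, so $\ell_n=R_i(x_{i+1}-x_i)$ and $f(\rho^n)=R_i v(R_i)=f(R_i)$ on that cell; the difference between the Riemann sum over particle positions and the exact integral is then bounded by a modulus of continuity of $\varphi$ times the cell widths, and the cell widths are controlled by the discrete maximum principle (Proposition \ref{prop:MP}, equation \eqref{eq:CL_maximum}) together with the support bound \eqref{eq:support}. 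Crucially, since the total support length grows only linearly in $t$ (uniformly in $n$) and each cell has width at most of order $\Delta_{\min}^{-1}$ in density but the number of cells in the support of $\varphi$ stays $O(n)$, the aggregate error scales like $1/n$ and vanishes.

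After integrating $\frac{d}{dt}\sum_i\ell_n\varphi(x_i(t),t)$ over $t\in(0,+\infty)$ and using that $\varphi$ vanishes for $t$ near $0$ and for $t$ large, the boundary terms disappear and I obtain
\[
\int_0^{+\infty}\int_\R\bigl[\rho^n\varphi_t+f(\rho^n)\varphi_x\bigr]dx\,dt=o_n(1).
\]
Finally I would pass to the limit: by Proposition \ref{prop:compactness}, $\rho^n\to\rho$ strongly in $L^1_{loc}$ and almost everywhere on the compact support of $\varphi$, and since $f$ is continuous (indeed $C^1$) and $\rho^n$ is uniformly bounded by \eqref{eq:maximum}, dominated convergence gives $f(\rho^n)\to f(\rho)$ in $L^1$ on the support of $\varphi$. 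Hence both integrals converge to their counterparts with $\rho$, yielding \eqref{eq:weak_formulation}.

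I expect the main obstacle to be the careful bookkeeping in the discretisation-error step, namely showing rigorously that the Riemann sum $\sum_i\ell_n\bigl(\varphi_t(x_i,t)+v(R_i)\varphi_x(x_i,t)\bigr)$ differs from the exact integral $\int_\R\bigl(\rho^n\varphi_t+f(\rho^n)\varphi_x\bigr)dx$ by a term vanishing as $n\to+\infty$. The subtlety is that $\varphi_t$ and $\varphi_x$ are evaluated at the left endpoint $x_i$ of each cell rather than averaged over $[x_i,x_{i+1})$; the correct estimate uses that the cell widths $x_{i+1}-x_i$ are uniformly small (on the order of $1/n$, since $R_i\leq\overline R$ forces $x_{i+1}-x_i\geq\ell_n/\overline R$ but also the number of cells intersecting $\supp\varphi$ is $O(n)$ with total length $O(1)$), combined with the uniform continuity of $\nabla\varphi$ on the compact support. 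Once this error estimate is in place, the rest of the argument is routine convergence of integrals.
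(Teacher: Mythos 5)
Your overall strategy---pairing $\varphi$ with the empirical measure via $\sum_i\ell_n\varphi(x_i(t),t)$, differentiating along the particle trajectories, and comparing the resulting Riemann sum with $\int_\R(\rho^n\varphi_t+f(\rho^n)\varphi_x)\,dx$---is a legitimate Lagrangian alternative to the paper's argument. The paper instead works directly with the piecewise constant density: it integrates the flux term by parts cell by cell and applies the Leibniz rule to $\frac{d}{dt}\int_{x_i(t)}^{x_{i+1}(t)}\varphi\,dx$, collapsing everything to the single error term $\sum_i\int R_i\,(v(R_i)-v(R_{i+1}))\,[\varphi(x_{i+1},t)-\fint_{x_i}^{x_{i+1}}\varphi\,dx]\,dt$, which is then bounded by $\frac{CT}{n}\sup_t\mathrm{TV}(v(\rho^n(\cdot,t)))$ using $R_i(x_{i+1}-x_i)=\ell_n$ together with the uniform $BV$ bound on $v(\rho^n)$ furnished by the one-sided Lipschitz estimate. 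Your route would dispense with that $BV$ input, which is a genuine simplification---provided the discretisation error is estimated correctly.

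That is exactly where your argument has a gap, in the step you yourself single out as the main obstacle. You claim the cell widths $x_{i+1}(t)-x_i(t)$ are uniformly of order $1/n$ and cite the discrete maximum principle for this; but Proposition \ref{prop:MP} bounds the gaps from \emph{below} (equivalently, the densities $R_i$ from above), not from above. Since $x_{i+1}-x_i=\ell_n/R_i$, a cell lying in a low-density or vacuum region (e.g.\ when $\Conv(\supp\overline\rho)$ strictly contains $\supp\overline\rho$, or after a rarefaction has spread the particles) has width $1/(nR_i)$ that need not tend to $0$ uniformly in $i$; the observation that $O(n)$ disjoint cells cover a set of length $O(1)$ only controls the \emph{average} width. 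Hence the per-cell error $\ell_n\bigl|\varphi_t(x_i,t)-\fint_{x_i}^{x_{i+1}}\varphi_t\,dx\bigr|\le\ell_n\,\omega(x_{i+1}-x_i)$, with $\omega$ a modulus of continuity of $\nabla\varphi$, cannot be summed as you indicate, and the asserted $O(1/n)$ rate is not available for $\varphi\in C^1_c$. The step is repairable: taking $\omega$ concave and using $\sum_i\ell_n=1$, Jensen's inequality gives
\begin{equation*}
\sum_i\ell_n\,\omega(x_{i+1}-x_i)\;\le\;\omega\Bigl(\ell_n\sum_i(x_{i+1}-x_i)\Bigr)\;=\;\omega(L/n)\;\longrightarrow\;0,
\end{equation*}
where $L$ is the length of the interval in \eqref{eq:support} up to the final time in $\supp\varphi$ (alternatively, split into cells of width $\le\varepsilon$ and the at most $O(1/\varepsilon)$ wider ones). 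With that correction, and the same treatment for the $\varphi_x$ term (using the uniform bound on $|v(R_i)|$), your proof closes; the final passage to the limit via Proposition \ref{prop:compactness} and dominated convergence is exactly as in the paper.
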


\begin{proof}
The strong converge of $\rho^n$ to $\rho$ almost everywhere and locally in $L^1$ easily implies
\[ \int_0^{+\infty}\int_\R \left[\rho^n(x,t) \varphi_t(x,t) + f(\rho^n(x,t))\varphi_x(x,t)\right] dx dt \to \int_0^{+\infty}\int_\R \left[\rho(x,t) \varphi_t(x,t) + f(\rho(x,t))\varphi_x(x,t)\right] dx dt\]
as $n\to +\infty$. Hence, we only need to prove that the left-hand side above converges to zero as $n\to +\infty$. A direct computation shows
\begin{align*}
    &  \int_0^{+\infty}\int_\R \left[\rho^n(x,t) \varphi_t(x,t) + f(\rho^n(x,t))\varphi_x(x,t)\right] dx dt \\
    & \ = \sum_{i=0}^{n-1}\int_0^{+\infty}R_i(t)\biggl[\int_{x_i(t)}^{x_{i+1}(t)}\varphi_t(x,t) dx +  v(R_i(t))\left[\varphi(x_{i+1}(t),t)-\varphi(x_i(t),t)\right]\biggr]dt\\
    & \ = \sum_{i=0}^{n-1}\int_0^{+\infty}R_i(t)\bigl[-\dot{x}_{i+1}(t)\varphi(x_{i+1}(t),t)+\dot{x}_i(t)\varphi(x_i(t),t) + v(R_i(t))\left[\varphi(x_{i+1}(t),t)-\varphi(x_i(t),t)\right]\bigr]dt\\
    & \ \ +\sum_{i=0}^{n-1}\int_0^{+\infty}n R_i(t)^2 (\dot{x}_{i+1}(t)-\dot{x}_i(t))\int_{x_i(t)}^{x_{i+1}(t)} \varphi(x,t) dx dt\\
    & \ = \sum_{i=0}^{n-1}\int_0^{+\infty}R_i(t)\left[v(R_{i}(t))-v(R_{i+1}(t))\right]\biggl[\varphi(x_{i+1}(t),t) -\fint_{x_i(t)}^{x_{i+1}(t)} \varphi(x,t) dx \biggr]dt.
\end{align*}
Since $\varphi\in C^1_c$, we obtain
\begin{align*}
    & \left|\varphi(x_{i+1}(t),t) -\fint_{x_i(t)}^{x_{i+1}(t)} \varphi(x,t) dx \right|\leq \fint_{x_i(t)}^{x_{i+1}(t)}\left|\varphi(x_{i+1}(t),t)- \varphi(x,t)\right| dx\\
    & \ \leq C (x_{i+1}(t)-x_i(t)),
\end{align*}
for some positive constant $C$ only depending on $\varphi$. Therefore, for some finite $T\geq 0$ such that $\mathrm{supp}(\varphi)\subset \R\times [0,T]$, we get
\begin{align*}
    & \left|\int_0^{+\infty}\int_\R \left[\rho^n(x,t) \varphi_t(x,t) + f(\rho^n(x,t))\varphi_x(x,t)\right] dx dt\right|\\
    & \ \leq \frac{C T}{n} \sup_{t\in [\delta,T]}\sum_{i=0}^{n-1}\Bigl|v|_{\mathrm{supp}(\varphi(\cdot,t))}(R_{i+1}(t))-v|_{\mathrm{supp}(\varphi(\cdot,t))}(R_i(t))\Bigr| = \frac{C T}{n} \sup_{t\in [\delta,T]}\mathrm{TV}\bigl(v|_{\mathrm{supp}(\varphi(\cdot,t))}(\rho^n(\cdot,t))\bigr).
\end{align*}
Since the total variation of $v(\rho^n(\cdot,t))$ restricted to the compact set $\bigcup_{t\in [\delta,T]} \mathrm{supp}(\varphi(\cdot,t))$ is uniformly bounded with respect to $n$ and $t\in [\delta,T]$, the last term above tends to zero as $n\to +\infty$, and the assertion is proven.
\end{proof}

A key property satisfied by the limit $\rho$ is given by the following proposition. Note that the assumption of $\overline{\rho}\in BV(\R)$ is not required here.
\begin{prop}
\label{prop3}
Assume that {\bf{(V1)}}, {\bf{(V2)}} and {\bf{(I)}} hold. Then the limit $\rho$ satisfies
\begin{equation*}
v(\rho(x+z,t))-v(\rho(x,t))\leq \frac{z}{t} \quad \text{for all } x\in \mathbb{R}, z>0 \text{ and } t>0.
\end{equation*}
In case $v$ is as in \eqref{eq:v_special}, then the limit $\rho$ satisfies
\begin{equation*}
f'(\rho(x+z,t))-f'(\rho(x,t))\leq \frac{z}{t} \quad \text{for all } x\in \mathbb{R}, z>0 \text{ and } t>0.
\end{equation*}
\end{prop}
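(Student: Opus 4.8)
The strategy is to pass to the limit $n\to+\infty$ in the discrete one-sided Lipschitz bound of Proposition~\ref{prop:oleinik1}, exploiting the a.e. convergence $\rho^n\to\rho$ furnished by Proposition~\ref{prop:compactness}. First I fix $t>0$ and rewrite \eqref{eq:oleinik2} as
\begin{equation*}
v(R_{k+1}(t))-v(R_k(t))\leq \frac{x_{k+1}(t)-x_k(t)}{t},\qquad k\in\{0,\dots,n-1\}.
\end{equation*}
Given two points $x<y$ lying in the cells $[x_i(t),x_{i+1}(t))$ and $[x_j(t),x_{j+1}(t))$ respectively (so that $i\leq j$ and $\rho^n(x,t)=R_i(t)$, $\rho^n(y,t)=R_j(t)$), a telescoping sum over $k=i,\dots,j-1$ gives
\begin{equation*}
v(\rho^n(y,t))-v(\rho^n(x,t))=v(R_j(t))-v(R_i(t))\leq \frac{x_j(t)-x_i(t)}{t}.
\end{equation*}

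Since $x_i(t)\leq x$ and $x_j(t)\leq y$, I then estimate $x_j(t)-x_i(t)\leq (y-x)+(x_{i+1}(t)-x_i(t))$, where the error term $x_{i+1}(t)-x_i(t)=1/(nR_i(t))$ is the width of the cell containing $x$. Now I fix $t$ in a set of full measure and $x,\,y=x+z$ in a set of full measure, so that $\rho^n\to\rho$ pointwise at $(x,t)$ and $(y,t)$. If $\rho(x,t)>0$, then $R_i(t)=\rho^n(x,t)\to\rho(x,t)>0$, hence $1/(nR_i(t))\to 0$, and letting $n\to+\infty$ (using continuity of $v$) yields $v(\rho(x+z,t))-v(\rho(x,t))\leq z/t$. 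If instead $\rho(x,t)=0$, the bound is immediate: by {\bf{(V1)}} and $\rho\geq 0$ we have $v(\rho(x+z,t))\leq v(0)=v(\rho(x,t))$, so the left-hand side is nonpositive. This proves the first inequality for a.e. $(x,t)$ and all $z>0$; since $v(\rho(\cdot,t))$ has locally bounded variation (as follows from the uniform $BV$ bound in the proof of Proposition~\ref{prop:compactness}), one finally selects its good (e.g. right-continuous) representative, for which the inequality holds at every point.

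For the case $v$ as in \eqref{eq:v_special}, the argument is identical after replacing Proposition~\ref{prop:oleinik1} with the improved estimate \eqref{1bis} of Theorem~\ref{prop:improved}, namely $f'(R_{k+1}(t))-f'(R_k(t))\leq (x_{k+1}(t)-x_k(t))/t$; the telescoping and limiting steps carry over verbatim, and in the degenerate case one uses that $f'(\rho)=v_{\max}-(\gamma+1)\rho^\gamma$ is decreasing with $f'(0)=v_{\max}$, so $f'(\rho(x+z,t))\leq f'(\rho(x,t))$ whenever $\rho(x,t)=0$. I expect the main difficulty to be the control of the discretisation error $x_{i+1}(t)-x_i(t)$: it vanishes in the many-particle limit only where the density is strictly positive, which is precisely why the vanishing-density region must be handled separately through the monotonicity of $v$ (resp. $f'$) rather than through the discrete slope bound.
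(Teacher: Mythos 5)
Your proof is correct in substance, but it follows a genuinely different route from the paper. The paper never works pointwise at the discrete level: it multiplies \eqref{eq:oleinik2} by a nonnegative test function, recognises the discrete difference quotient of $\varphi$ with increment $z=x_{i+1}(t)-x_i(t)$, passes to the limit to obtain the distributional inequality $\int_{\mathbb{R}_+}\int_{\mathbb{R}} t\, v(\rho)\varphi_x\,dx\,dt \geq -\int_{\mathbb{R}_+}\int_{\mathbb{R}}\varphi\,dx\,dt$, and then invokes Lemma \ref{prop2} to convert this into the pointwise statement. You instead telescope \eqref{eq:oleinik2} between two cells to get a pointwise discrete inequality and pass to the a.e.\ limit directly. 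The weak formulation buys the paper two things that you must handle by hand: the cell-width error and the degenerate set $\{\rho=0\}$ simply do not appear in the integral formulation (the difference quotient of the test function converges to $\varphi_x$ irrespective of where the density vanishes), and the distributional inequality is precisely the form needed later for Definition \ref{definition_extended_entropy_solution}, whereas your pointwise conclusion has to be converted back through the implication ${\bf{C)}}\Longrightarrow{\bf{B)}}$ of Lemma \ref{prop2}. Your route, in exchange, is more elementary and makes the reading of \eqref{eq:oleinik2} as a discrete entropy condition transparent; your treatment of the vanishing-density region via monotonicity of $v$ (resp.\ of $f'$) is correct and is indeed the crux of making the pointwise argument work. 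Two loose ends worth tightening: (i) your telescoping implicitly assumes both $x$ and $x+z$ lie in $[x_0(t),x_n(t)]$; the cases where one of them falls outside the support are covered by the convention $R_n(t)=0$ (so that \eqref{eq:oleinik2} is available up to $i=n-1$) together with $v(0)=v_{\max}$, and should be stated; (ii) your argument yields the inequality only for a.e.\ $t$, so the claim ``for all $t>0$'' requires selecting a representative that is also continuous in time (e.g.\ via Proposition \ref{prop:wasserstein}) — though the paper is equally silent on this point and only the distributional version is used downstream.
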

\begin{proof}
Recall the one-sided estimate \eqref{eq:oleinik2}
\begin{equation}
\label{16}
t \frac{v(\rho^n(x_{i+1}(t),t))-v(\rho^n(x_{i}(t),t))}{x_{i+1}(t)-x_i(t)}\leq 1\quad \text{for} \ i\in \{0,\dots,n-1\}, \ t> 0.
\end{equation}
Let us now denote with $z:=x_{i+1}(t)-x_i(t)$ and $x:=x_i(t)$. Multiplying \eqref{16} by any test function $\varphi\in C_c^\infty(\mathbb{R}\times \mathbb{R}_+)$ with $\varphi\geq 0$ and integrating on $\mathbb{R}\times \mathbb{R}_+$, we get
\begin{equation*}
\int_{\mathbb{R}_+}\int_{\mathbb{R}} t\frac{v(\rho^n(x+z,t))-v(\rho^n(x,t))}{z}\varphi(x,t) dx dt\leq \int_{\mathbb{R}_+}\int_{\mathbb{R}} \varphi (x,t) dx dt,
\end{equation*}
that is
\begin{equation*}
\int_{\mathbb{R}_+}\int_{\mathbb{R}}t v(\rho^n(x,t))\frac{\varphi(x,t)-\varphi(x-z,t)}{z} dx dt\geq -\int_{\mathbb{R}_+}\int_{\mathbb{R}} \varphi (x,t) dx dt.
\end{equation*}
Letting first $z\to 0$ and then $n\to +\infty$, by the strong convergence of $\rho^n$ to $\rho$ we get
\begin{equation*}
\int_{\mathbb{R}_+}\int_{\mathbb{R}}v(\rho(x,t))\varphi_x(x,t) dx dt\geq - \int_{\mathbb{R}_+}\int_{\mathbb{R}}\frac{1}{t} \varphi (x,t) dx dt
\end{equation*}
and finally, applying Lemma \ref{prop2}, we get the conclusion. The statement for $v$ in \eqref{eq:v_special} follows similarly by substituting $v$ by $f'$.
\end{proof}

Now we show that, in the general case for $v$, the limit $\rho$ satisfies the entropy condition in Definition \ref{definition_extended_entropy_solution}.
\begin{prop}
\label{Prop1}
Assume that {\bf{(V1)}}, {\bf{(V2)}}, and {\bf{(I)}} hold. Then there exists a positive constant $C\geq 1$, depending only on $v$ and $\overline{\rho}$, such that the limit $\rho$ satisfies \eqref{def_entropy_extended}.
\end{prop}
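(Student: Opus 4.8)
The plan is to deduce the entropy bound for $f'(\rho)$ from the one already available for $v(\rho)$, using assumption {\bf{(V2)}} to relate the two. The only inputs beyond {\bf{(V1)}}--{\bf{(V2)}} are the pointwise one-sided estimate furnished by Proposition \ref{prop3},
\[v(\rho(x+z,t))-v(\rho(x,t))\leq \frac{z}{t},\qquad x\in\R,\ z>0,\ t>0,\]
and the decomposition $f'(\rho)=v(\rho)+\phi(\rho)$ with $\phi(\rho)=\rho v'(\rho)$.

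The key observation is that $f'$ is a non-decreasing, $(1+K)$-Lipschitz function of $v$. Since $v$ is strictly decreasing by {\bf{(V1)}}, it is invertible with inverse $w:=v^{-1}$; setting $g:=\phi\circ w$, assumption {\bf{(V2)}} states precisely that all difference quotients of $g$ lie in $[0,K]$, i.e. $g$ is non-decreasing and $K$-Lipschitz. Consequently $h(a):=a+g(a)$ is non-decreasing and $(1+K)$-Lipschitz, and $f'(\rho)=h(v(\rho))$. I would then argue pointwise: writing $a:=v(\rho(x,t))$ and $b:=v(\rho(x+z,t))$, if $b\geq a$ then
\[f'(\rho(x+z,t))-f'(\rho(x,t))=h(b)-h(a)\leq (1+K)(b-a)\leq (1+K)\frac{z}{t},\]
whereas if $b<a$ then $h(b)-h(a)\leq 0\leq (1+K)z/t$ by monotonicity alone. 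Hence, with $C:=1+K\geq 1$ (depending only on $K$, i.e. on $v$, and on $\overline{R}$, i.e. on $\overline{\rho}$),
\[f'(\rho(x+z,t))-f'(\rho(x,t))\leq C\frac{z}{t}\qquad\text{for all }x\in\R,\ z>0,\ t>0.\]

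It then remains to convert this pointwise difference bound into the distributional inequality \eqref{def_entropy_extended}, which I would do exactly as in the last steps of Proposition \ref{prop3}: multiply by an arbitrary $0\leq\varphi\in C_c^\infty(\R\times\R_+)$, integrate in $(x,t)$, shift the increment onto the test function via the change of variables $x\mapsto x-z$, and let $z\searrow 0$ to obtain
\[\int_{\R_+}\int_\R f'(\rho(x,t))\,\varphi_x(x,t)\,dx\,dt\geq -\int_{\R_+}\int_\R \frac{C}{t}\,\varphi(x,t)\,dx\,dt,\]
which by Lemma \ref{prop2} is precisely $f'(\rho)_x\leq C/t$ in $\mathcal{D}'(\R\times[0,+\infty))$.

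The argument is short once the reparametrisation $f'(\rho)=h(v(\rho))$ is identified, so I do not expect a serious obstacle; the one point demanding care is that \emph{both} inequalities in {\bf{(V2)}} are indispensable. The upper bound $K$ is what controls the increment where $v(\rho)$ --- and hence $\rho$ --- increases across the step $z$, while the lower bound (non-negativity of the ratio, equivalently monotonicity of $\phi$) is precisely what makes the opposite case cost nothing; dropping either would invalidate the transfer from the $v$-estimate to the $f'$-estimate.
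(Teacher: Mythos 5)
Your proof is correct and follows essentially the same route as the paper's: both use the $v$-increment bound from Proposition \ref{prop3}, assumption {\bf{(V2)}} (non-negativity plus the upper bound $K$ on the divided difference of $\phi$ against $v$) to pass to the $f'$-increment with $C=1+K$, handling the two sign cases of $v(\rho(x+z,t))-v(\rho(x,t))$ exactly as the paper does, and Lemma \ref{prop2} to move between the pointwise and distributional formulations. The only difference is cosmetic --- you phrase the transfer as the composition $f'(\rho)=h(v(\rho))$ with $h$ non-decreasing and $(1+K)$-Lipschitz, whereas the paper writes the same estimate directly via the divided-difference ratio $\bigl(f'(\rho(x+z,t))-f'(\rho(x,t))\bigr)/\bigl(v(\rho(x+z,t))-v(\rho(x,t))\bigr)\in[0,1+K]$.
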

\begin{proof}
We first notice that, by Lemma \ref{prop2}, proving \eqref{def_entropy_extended}  is equivalent to show that
\begin{equation*}
f'(\rho(x+z,t))-f'(\rho(x,t))\leq \frac{Cz}{t} \quad \text{for all } x\in \mathbb{R}, z>0 \text{ and } t>0.
\end{equation*}
Let us fix $x\in\mathbb{R}, z>0$ and $t>0$ and let us suppose that $\rho(x+z,t)\neq \rho(x,t)$ (otherwise the result is trivially true). Then it holds
\begin{equation*}
f'(\rho(x+z,t))-f'(\rho(x,t))=\frac{f'(\rho(x+z,t))-f'(\rho(x,t))}{v(\rho(x+z,t))-v(\rho(x,t))}\bigl(v(\rho(x+z,t))-v(\rho(x,t))\bigr),
\end{equation*}
where we already know, by Proposition \ref{prop3}, that
\begin{equation*}
v(\rho(x+z,t))-v(\rho(x,t))\leq \frac{z}{t}.
\end{equation*}
Now, assumption {\bf{(V2)}} implies
\begin{equation*}
\frac{f'(\rho(x+z,t))-f'(\rho(x,t))}{v(\rho(x+z,t))-v(\rho(x,t))} = 1 + \frac{\phi(\rho(x+z,t))-\phi(\rho(x,t))}{v(\rho(x+z,t))-v(\rho(x,t))}\leq 1 + K.
\end{equation*}
Moreover, due to {\bf{(V2)}} the above ratio is always non-negative. Therefore, 
\begin{align*}
    & f'(\rho(x+z,t))-f'(\rho(x,t)) \leq (1+K)\frac{z}{t},
\end{align*}
which completes the proof.
\end{proof}

We now prove that the limit $\rho$ has the required $L^1$ continuity near $t=0$ needed to prove uniqueness in the case of extended entropy solutions.

\begin{lem}\label{lem:nearzero}
Under the assumptions {\bf{(I)}} and $\overline{\rho}\in BV$, the limit $\rho$ satisfies
\[\lim_{t\searrow 0} \|\rho(\cdot,t)-\overline{\rho}\|_{L^1(\R)} = 0.\]
\end{lem}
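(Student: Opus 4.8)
The plan is to transfer the near-$t=0$ estimates available for the discrete densities $\rho^n$ to the limit $\rho$, essentially mimicking the proof of Proposition \ref{prop:nearzero} directly at the level of $\rho$. The main obstacle is that the strong compactness in Proposition \ref{prop:compactness} only provides convergence in $L^1_{loc}(\R\times(0,+\infty))$, which \emph{excludes} a neighbourhood of $t=0$ and therefore cannot be invoked at the initial time, nor to control a fixed time slice $\|\rho^n(\cdot,t)-\overline{\rho}^n\|_{L^1}$ near $t=0$ in the limit. The device that bypasses this is the interpolation inequality \eqref{eq:GN}, which bounds the $L^1$ norm of a $BV$ function by the geometric mean of its total variation and the $L^1$ norm of its primitive (i.e. a $1$-Wasserstein distance): the Wasserstein estimate is robust enough to pass to the limit at fixed time, whereas the pointwise-in-$t$ $L^1$ estimate is not.

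First I would record the two ingredients that survive the many-particle limit. By lower semicontinuity of the total variation under $L^1$ convergence together with Lemma \ref{lem:Bv}, for a.e. $t>0$ the slice $\rho(\cdot,t)$ inherits $\mathrm{TV}[\rho(\cdot,t)]\le \mathrm{TV}[\overline{\rho}]$, using that $\rho^n(\cdot,t)\to\rho(\cdot,t)$ in $L^1(\R)$ for a.e. $t$, which follows from Proposition \ref{prop:compactness} combined with the uniform $L^\infty$ bound \eqref{eq:maximum} and the uniform compact support \eqref{eq:support}. On the other hand, since $\rho^n(\cdot,0)=\overline{\rho}^n$, Proposition \ref{prop:wasserstein} gives $W_1(\rho^n(\cdot,t),\overline{\rho}^n)\le Ct$; recalling that $\overline{\rho}^n\to\overline{\rho}$ in $L^1$ (hence in $W_1$, as all supports lie in a fixed compact set) and that $\rho^n(\cdot,t)\to\rho(\cdot,t)$ in $L^1$, hence in $W_1$, for a.e. $t$, I would pass to the limit to obtain $W_1(\rho(\cdot,t),\overline{\rho})\le Ct$ for a.e. $t>0$.

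Then I would apply \eqref{eq:GN} to $g=\rho(\cdot,t)-\overline{\rho}$. Its primitive is $G(x)=\int_{-\infty}^x(\rho(y,t)-\overline{\rho}(y))\,dy$, whose $L^1$ norm equals $W_1(\rho(\cdot,t),\overline{\rho})$ because both densities have unit mass, while $\mathrm{TV}[g]\le \mathrm{TV}[\rho(\cdot,t)]+\mathrm{TV}[\overline{\rho}]\le 2\,\mathrm{TV}[\overline{\rho}]$ by the previous step. This yields
\[
\|\rho(\cdot,t)-\overline{\rho}\|_{L^1(\R)}\le C\,\bigl(2\,\mathrm{TV}[\overline{\rho}]\bigr)^{1/2}\,\bigl(Ct\bigr)^{1/2}\le C'\,t^{1/2}
\]
for a.e. $t>0$, and letting $t\searrow 0$ gives the claim.

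The crux, as noted, is the exclusion of $t=0$ from the $L^1_{loc}$ convergence; the resolution is that the Wasserstein bound is stable under the weak notions of convergence available at fixed time, so that \eqref{eq:GN} converts it, at the cost of a square root, into the desired $L^1$ decay. A minor technical point is the almost-everywhere-in-$t$ upgrade of the spacetime $L^1_{loc}$ convergence to $L^1$ convergence of individual time slices, which is exactly where \eqref{eq:maximum} and \eqref{eq:support} enter; should one wish to remove the ``a.e.'' qualifier and obtain the limit for \emph{every} $t$, the same inequality \eqref{eq:GN} applied to $g=\rho(\cdot,t)-\rho(\cdot,s)$ together with the $W_1$-Lipschitz continuity \eqref{eq:uniform_in_time} and the uniform bound \eqref{eq:TV} gives $\tfrac12$-Hölder continuity of $t\mapsto\rho(\cdot,t)$ in $L^1$, which propagates the estimate to all $t$.
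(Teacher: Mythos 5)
Your proof is correct, but it takes a different (slightly longer) route than the paper. The paper's argument is a two-line affair: since $\overline{\rho}^n\to\overline{\rho}$ and $\rho^n(\cdot,t)\to\rho(\cdot,t)$ almost everywhere (up to a subsequence), Fatou's lemma gives $\|\rho(\cdot,t)-\overline{\rho}\|_{L^1}\leq\liminf_n\|\rho^n(\cdot,t)-\overline{\rho}^n\|_{L^1}$, and the right-hand side is $\leq Ct^{1/2}$ by Proposition \ref{prop:nearzero}; that is, the discrete $t^{1/2}$-estimate is transferred to the limit wholesale. You instead pass the two \emph{ingredients} of Proposition \ref{prop:nearzero} to the limit separately --- the total variation bound via lower semicontinuity of $\mathrm{TV}$ under $L^1$ convergence together with Lemma \ref{lem:Bv}, and the Wasserstein bound $W_1(\rho(\cdot,t),\overline{\rho})\leq Ct$ via the stability of $W_1$ under $L^1$ convergence of uniformly compactly supported densities together with Proposition \ref{prop:wasserstein} --- and then rerun the interpolation \eqref{eq:GN} at the continuum level. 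Both arguments rest on the same inequality chain; the difference is whether \eqref{eq:GN} is applied before or after the many-particle limit. The paper's version is shorter and avoids having to justify the $W_1$- and $\mathrm{TV}$-limit passages; yours costs those two extra (routine) passages but yields as byproducts the statements $\mathrm{TV}[\rho(\cdot,t)]\leq\mathrm{TV}[\overline{\rho}]$ and $W_1(\rho(\cdot,t),\overline{\rho})\leq Ct$ for the limit itself, and your closing remark --- that \eqref{eq:GN} combined with \eqref{eq:uniform_in_time} and \eqref{eq:TV} upgrades the conclusion from almost every $t$ to every $t$ via $\tfrac12$-H\"older continuity of $t\mapsto\rho(\cdot,t)$ in $L^1$ --- addresses a point the paper's Fatou argument leaves implicit.
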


\begin{proof}
By the second inequality in \eqref{eq:TV} we have that, up to a subsequence, $\overline{\rho}^n$ converges almost everywhere to $\overline{\rho}$. By Fatou's lemma then we have
\[ \|\rho(\cdot,t)-\overline{\rho}\|_{L^1(\R)} \leq \liminf_{n\to +\infty} \|\rho^n(\cdot,t)-\overline{\rho}^n\|_{L^1(\R)},\]
and finally Proposition \ref{prop:nearzero} implies the assertion.
\end{proof}

We now complete the proofs of the two convergence Theorems \ref{thm1} and \ref{thm2}.

\begin{proof}[Proof of Theorem \ref{thm1}]
Proposition \ref{prop:compactness} gives, up to a subsequence, strong convergence of $\{\rho^n\}_{n\in\mathbb{N}}$ almost everywhere on $\R\times (0,+\infty)$ and locally in $L^1$. Lemma \ref{lem:nearzero} gives the $L^1$ continuity of $\rho$ near zero. Proposition \ref{prop:weak} proves $\rho$ satisfies the conservation law in the sense of distributions. Moreover, Proposition \ref{Prop1} proves that $\rho$ satisfies the one-sided Lipschitz condition \eqref{def_entropy_extended}. Hence, $\rho$ is an extended entropy solution in the sense of Definition \ref{definition_extended_entropy_solution}. Finally, Proposition \ref{prop4} provides uniqueness.
\end{proof}

\begin{proof}[Proof of Theorem \ref{thm2}]
Proposition \ref{prop:compactness} gives, up to a subsequence, strong convergence of $\{\rho^n\}_{n\in\mathbb{N}}$ almost everywhere on $\R\times (0,+\infty)$ and locally in $L^1$. Proposition \ref{prop:weak} proves $\rho$ satisfies the conservation law in the sense of distributions. The $1$-Wasserstein continuity proven in Proposition \ref{prop:wasserstein} shows $\rho$ is continuous at $t=0$  in the $1$-Wasserstein metric. Moreover, the second statement in Proposition \ref{prop3} proves that $\rho$ satisfies the one-sided Lipschitz condition \eqref{def_entropy_classical}. Hence, $\rho$ is a classical entropy solution in the sense of Definition \ref{definition_classical_entropy_solution}. Uniqueness follows from the uniqueness result in \cite{chen_rascle} and to the fact that \eqref{def_entropy_classical} is equivalent to Kruzkov's condition \eqref{eq:kruzkov_intro}-\eqref{eq:entropy_entropy_flux}.
\end{proof}

\appendix
\section{A technical lemma}

\begin{lem}
\label{prop2}
Let $\rho\in L^\infty([0,+\infty)\,;\,L^1\cap L^\infty(\R))$, $g\in C([0,\overline{R}])$ and $C\geq 0$. Then the following three properties are equivalent:
\begin{align}\label{20}
&{\bf{A)}}\quad  g(\rho(x,t))_x\leq \frac{C}{t} \quad \hbox{in  $\mathcal{D}'(\R\times[0,+\infty))$.}
\\&\label{21}
{\bf{B)}}\quad \int_0^{+\infty}\int_{\mathbb{R}} t\,g(\rho(x,t))\varphi_x(x,t) dx dt\geq - C \int_{0}^{+\infty}\int_{\mathbb{R}} \varphi (x,t) dx dt \quad \text{for all  $\varphi\in C_c^\infty(\mathbb{R}\times \mathbb{R}_+)$ with $\varphi\geq 0$.}
\\&\label{22}
{\bf{C)}}\quad g(\rho(x+z,t))-g(\rho(x,t))\leq \frac{Cz}{t} \quad \text{for all } x\in \mathbb{R}, z>0 \text{ and } t>0.
\end{align}
\end{lem}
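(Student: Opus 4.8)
The plan is to prove the three statements equivalent by establishing $\mathbf{A)}\Leftrightarrow\mathbf{B)}$ directly and then closing the cycle with $\mathbf{C)}\Rightarrow\mathbf{B)}$ and $\mathbf{B)}\Rightarrow\mathbf{C)}$. Throughout I use that, since $g\in C([0,\overline{R}])$ and $\rho$ takes values in $[0,\overline{R}]$, the function $g(\rho)$ is bounded and measurable, so every integral below converges absolutely once tested against a compactly supported function.

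For $\mathbf{A)}\Leftrightarrow\mathbf{B)}$ I would simply unfold the distributional inequality in $\mathbf{A)}$: it states that for every nonnegative $\phi\in C_c^\infty(\R\times\R_+)$ one has $-\int_0^{+\infty}\int_\R g(\rho)\phi_x\,dx\,dt\leq C\int_0^{+\infty}\int_\R \phi/t\,dx\,dt$. The key observation is that the support of any such $\phi$ lies in $\R\times[\delta,T]$ for some $\delta>0$, so multiplication and division by $t$ preserve smoothness, compact support, and nonnegativity. Choosing $\phi=t\varphi$ with $\varphi\geq 0$ turns $\mathbf{A)}$ into $\mathbf{B)}$, whereas choosing $\varphi=\phi/t$ in $\mathbf{B)}$ recovers $\mathbf{A)}$. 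This is nothing more than a change of test function.

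For $\mathbf{C)}\Rightarrow\mathbf{B)}$ I would fix $z>0$ and $\varphi\geq 0$ in $C_c^\infty$, multiply the pointwise inequality in $\mathbf{C)}$ by $\frac{t}{z}\varphi(x,t)\geq 0$, integrate over $\R\times\R_+$, and translate the variable in the term containing $g(\rho(x+z,t))$ (set $y=x+z$). This produces $\int_0^{+\infty}\int_\R t\,g(\rho)\,\frac{\varphi(x-z,t)-\varphi(x,t)}{z}\,dx\,dt\leq C\int_0^{+\infty}\int_\R\varphi$; since $\varphi\in C_c^\infty$, the difference quotient converges uniformly to $-\varphi_x$ as $z\searrow 0$, and the boundedness of $g(\rho)$ together with the compact support lets me pass to the limit and obtain $\mathbf{B)}$.

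The genuinely delicate implication is $\mathbf{B)}\Rightarrow\mathbf{C)}$, which upgrades a distributional one-sided bound to a pointwise one. I would first disintegrate in time: testing $\mathbf{A)}$ (equivalently $\mathbf{B)}$) against product functions $\phi(x,t)=a(t)b(x)$ with $a,b\geq 0$ shows that $-\int_\R g(\rho(x,t))b'(x)\,dx\leq (C/t)\int_\R b(x)\,dx$ for a.e.\ $t$, and running $b$ through a countable dense family fixes a common full-measure set of times on which $g(\rho(\cdot,t))_x\leq C/t$ holds distributionally in $x$. For such $t$, I would mollify in space, setting $u_\epsilon(\cdot,t):=g(\rho(\cdot,t))*j_\epsilon$; convolving the inequality with the nonnegative, mass-one kernel $j_\epsilon$ gives $\partial_x u_\epsilon(\cdot,t)\leq C/t$ \emph{pointwise}, whence $u_\epsilon(x+z,t)-u_\epsilon(x,t)=\int_x^{x+z}\partial_x u_\epsilon(\xi,t)\,d\xi\leq Cz/t$. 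Letting $\epsilon\searrow 0$ and using $L^1_{loc}$ convergence (hence a.e.\ along a subsequence) of $u_\epsilon$ to $g(\rho(\cdot,t))$ yields $\mathbf{C)}$ at all Lebesgue points $x,x+z$, and selecting the good representative of $g(\rho(\cdot,t))$ delivers the inequality for all $x,z$. I expect the main obstacle to be precisely this last passage: the careful bookkeeping of the two exceptional null sets (the bad times and the non-Lebesgue points) and the choice of a pointwise representative for which $\mathbf{C)}$ holds for \emph{every} $x,z,t$ rather than merely almost everywhere.
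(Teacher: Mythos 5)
Your proposal is correct, and its overall architecture (treating $\mathbf{A)}\Leftrightarrow\mathbf{B)}$ as a change of test function, proving $\mathbf{C)}\Rightarrow\mathbf{B)}$ by difference quotients, and isolating $\mathbf{B)}\Rightarrow\mathbf{C)}$ as the delicate step) matches the paper's. The steps $\mathbf{A)}\Leftrightarrow\mathbf{B)}$ (which the paper dismisses as trivial) and $\mathbf{C)}\Rightarrow\mathbf{B)}$ are identical to the paper's, down to the translation $y=x+z$ and the uniform convergence of the difference quotient. Where you genuinely diverge is $\mathbf{B)}\Rightarrow\mathbf{C)}$: the paper tests against $\varphi(x,t)=\phi_\varepsilon(x)\psi(t)$ with $\phi_\varepsilon$ a smooth approximation of $\mathbf{1}_{[a,a+z]}$ whose derivative converges to $\delta_a-\delta_{a+z}$, so the difference $g(\rho(a+z,t))-g(\rho(a,t))$ appears directly after letting $\varepsilon\to 0$ and then invoking the arbitrariness of $\psi\geq 0$; you instead disintegrate in time over a countable dense family of spatial test functions, mollify in space, and integrate the pointwise bound $\partial_x u_\epsilon\leq C/t$ over $[x,x+z]$. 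The two routes buy different things: the paper's is shorter and produces the endpoint difference in one stroke, but it silently requires $a$ and $a+z$ to be Lebesgue points of $g(\rho(\cdot,t))$ (and the final "for all $t$" to be read through a good representative in $t$ as well); your mollification argument is longer but makes exactly this bookkeeping explicit — the nonpositive distribution $\partial_x g(\rho(\cdot,t))-C/t$ is a nonpositive measure, its mollification is pointwise nonpositive, and the everywhere statement in $\mathbf{C)}$ is recovered by choosing the monotone representative of $x\mapsto g(\rho(x,t))-Cx/t$. Both are standard and both are valid; yours is the more scrupulous version of the same lemma.
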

\begin{proof}
The equivalence between ${\bf{A)}}$ and ${\bf{B)}}$ is trivial. Let us now fix $a, z\in\mathbb{R}$ with $z>0$ and we choose $\varphi(x,t)=\phi_\varepsilon(x)\psi(t)$ where, for $\varepsilon\in (0,z)$,  $\phi_\varepsilon(x)$ is a non-negative $C^{\infty}$ function such that
\begin{equation*}
\phi_\varepsilon(x):=\begin{cases} 1 \quad &\text{for } x\in [a+\varepsilon,a+z-\varepsilon],
\\
0<\phi_\varepsilon(x)<1 \quad &\text{for } x\in (a,a+\varepsilon)\cup (a+z-\varepsilon,a+z),
\\
0 \quad &\text{otherwise,}
\end{cases}
\end{equation*} 
and $\partial_x \phi_\varepsilon(x)\to \delta_{a}(x)-\delta_{a+z}(x)$ in $\mathcal{D}'(\mathbb{R})$ as $\varepsilon\to 0$, whereas $\psi\in C_c^\infty(\mathbb{R}_+)$ with $\psi\geq 0$. With this choice of $\varphi$ and letting $\varepsilon\to 0$, then from \eqref{21} we get
\begin{equation*}
\int_0^{+\infty} t \Bigl[g(\rho(a+z,t))-g(\rho(a,t))\Bigr]\psi(t) dt \leq C z\int_0^{+\infty}\psi (t) dt,
\end{equation*}
that is
\begin{equation*}
\int_{0}^{+\infty}  \Bigl[t\left(g(\rho(a+z,t))-g\bigl(\rho(a,t)\bigr)\right)-Cz\Bigr]\psi(t) dt\leq 0.
\end{equation*} 
Finally, since $\psi\in C_c^{\infty}(\mathbb{R}_+)$ with $\psi \geq 0$ is arbitrary, then \eqref{22} holds and ${\bf{B)}}\Longrightarrow {\bf{C)}}$.
To obtain the implication ${\bf{C)}}\Longrightarrow {\bf{B)}}$ we multiply \eqref{22} by $t/z$ times a non-negative test function $\varphi\in C^\infty_c(\mathbb{R}\times [0,+\infty))$ and integrate on $\R\times [0,+\infty)$. A change of variable implies
\[\int_0^{+\infty} \int_\R  t g(\rho(x,t)) \frac{\varphi(x-z,t)-\varphi(x,t)}{z} dx dt \leq C\int_0^{+\infty} \int_\R\varphi(x,t) dx dt
\]
and ${\bf{B)}}$ follows by letting $z\searrow 0$.
\end{proof}

\section*{Acknowledgments}
We thank M. D. Rosini for his useful suggestions on the writing of this manuscript. Part of this work was carried out during the visit of MDF to King Abdullah
University of Science and Technology (KAUST) in Thuwal, Saudi Arabia. MDF is deeply grateful
for the warm hospitality by people at KAUST, for the excellent scientific environment, and for
the support in the development of this work. 
MDF acknowledges support from the \enquote{InterMaths} project of the DISIM Department at the University of L'Aquila.

{\small\bibliography{references}
\bibliographystyle{abbrv}}
\end{document}